\numberwithin{equation}{section}
\newtheorem{thm}{Theorem}[section]
\newtheorem{cor}[thm]{Corollary}
\newtheorem{lem}[thm]{Lemma}
\theoremstyle{definition}
\numberwithin{equation}{section}
\theoremstyle{definition}
\newtheorem*{ack}{Acknowledgements}
\newcommand{\Q}{\mathbb{Q}}
\newcommand{\R}{\mathbb{R}}
\newcommand{\C}{\mathbb{C}}
\newcommand{\Z}{\mathbb{Z}}
\newcommand{\eps}{\varepsilon}
\newcommand{\si}{\sigma}
\newcommand{\re}{\textup{Re}}
\newcommand{\im}{\textup{Im}}
\newcommand{\LL}{\mathcal{L}}
\newcommand{\abs}[1]{\left\lvert#1\right\rvert}
\newcommand{\vv}{\mathbf{v}}
\newcommand{\Log}[1]{\operatorname{Log}(#1)}
\newcommand{\CycUnitGp}{\mathcal{C}}
\newcommand{\Lconst}{C_L}
\newcommand{\prob}[1]{\operatorname{Pr}\left[ #1\right]}
\newcommand{\B}{\mathbf{B}}
\newcommand{\bb}{\mathbf{b}}
\begin{document}
\title[RSG via negative moments of Dirichlet $L$-functions]{Recovering short generators via negative moments of Dirichlet $L$-functions} %%%%%%%%%%%%
\author[I.-I. Ng, Y. Toma]{Iu-Iong Ng and Yuichiro Toma}
%\date{\today}
\address[Ng]{Graduate School of Mathematics, Nagoya University, Furo-cho, Chikusa-ku, Nagoya 464-8602, Japan.}
\email{m20048d@math.nagoya-u.ac.jp}
\address[Toma]{Graduate School of Mathematics, Nagoya University, Furo-cho, Chikusa-ku, Nagoya 464-8602, Japan.}
\email{m20034y@math.nagoya-u.ac.jp}
\makeatletter
\@namedef{subjclassname@2020}{\textup{2020} Mathematics Subject Classification}
\makeatother
\subjclass[2020]{11T71,11R18,11M06,68W40}
\keywords{Short Generator Problem (SGP), log-cyclotomic-unit lattice, negative moments of Dirichlet $L$-functions}
\maketitle

\begin{abstract}
Cramer, Ducas, Peikert and, Regev~[EUROCRYPT 2016] proposed an efficient algorithm for recovering short generators of principal ideals in $q$-th cyclotomic fields with $q$ being a prime power.
In this paper, we improve their analysis of the dual basis of the log-cyclotomic-unit lattice under the Generalised Riemann Hypothesis and in the case that $q$ is a prime number by the negative square moment of Dirichlet $L$-functions at $s=1$.
As an implication, we obtain a better lower bound on the success probability for the algorithm in this special case.
In order to prove our main result, we also give an analysis of the behaviour of negative $2k$-th moments of Dirichlet $L$-functions at $s=1$.
\end{abstract} %%%%%%%%%

\section{Introduction}

In order to protect the security from the threats make by the developing fault-tolerant quantum computers, it is important to prepare and move to cryptosystems that resist quantum computing, and that is the purpose of post-quantum cryptography.
From the point of view of attacks, we provide an improvement on the analysis of the algorithm solving a certain lattice problem in a special case.

One of the assumptions we made in the special case is the Generalised Riemann Hypothesis (GRH). It asserts that all nontrivial zeros of Dirichlet $L$-functions lie on the certain line that the real part is $1/2$ (cf.~\cite[\S 20]{D}).

\subsection{Lattice-based cryptosystems}

In 2016, NIST (The National Institute of Standards and Technology, US) announced the Post-Quantum Cryptography Standardization program, in which several candidate schemes are lattice-based, and most of them utilise ideal lattices as their main object.
It results in that most lattice-based cryptosystems rely on the hardness of the algebraic lattice problems such as the shortest vector problem (SVP). 
Hence it is crucial to consider the efficiency of attacks against these lattice problems.
Although there are efficient quantum algorithms for solving some of these lattice problem in special cases with some restrictions or relaxations, lattice-based cryptosystems are believed to be secure against quantum computers.

In this article, we consider the problem called Short Generator of a Principal Ideal Problem (SPIP), which aims to find a sufficiently short generator given a $\Z$-basis of an ideal lattice.
The standard way of attacks toward SPIP consists of two parts (c.f.~\cite{CDPR15}).
\begin{enumerate}
    \item The Principal Ideal Problem (PIP): Given a $\Z$-basis of the principal ideal, find a generator for the ideal.
    \item Short Generator Problem (SGP): Given a generator of a principal ideal, recover a sufficiently short generator for the ideal.
\end{enumerate}
The best known classical algorithms solving PIP, which are proposed by Biasse~\cite{Biasse14PIP} and Fieker~\cite{BF14PIP}, have subexponential running time.
In contrast to classical algorithms, there exists a polynomial time quantum algorithm for solving PIP proposed by Biasse and Song~\cite{biasse2016efficient}. 

The efficiency of the classical SGP algorithm for $q$-th cyclotomic number field with $q$ being a prime power is proved by Cramer, Ducas, Peikert and Regev~\cite{CDPR15}.
Holzer, Wunderer and Buchmann~\cite{HWB17} generalised the result to $q=p_1^{\alpha}p_2^{\beta}$, a product of two odd prime powers.
Okumura, Sugiyama, Yasuda and Takagi~\cite{OSYT18} improved the analysis of the norm of dual basis by calculating the explicit lower and upper bounds of Dirichlet $L$-functions.

We remark an indicative application of SPIP that, based on the quantum PIP algorithm~\cite{biasse2016efficient} and the SGP algorithm~\cite{CDPR15}, there is a polynomial time quantum algorithm approximately solving a special case of SVP, i.e., SVP in ideal lattices of cyclotomic fields, proposed by Cramer, Ducas, and Wesolowski~\cite{CDW21}.

\subsection{Our contributions}

In the present paper, we focus on the SGP algorithm of~\cite{CDPR15} in a special case and show that the analysis of the norm of dual basis can be improved under certain assumptions.
In comparison with~\cite{CDPR15}, where the negative square moment of Dirichlet $L$-functions is bounded with the well-known bound on $L$-functions, we directly calculate the negative square moment.

There are two main contributions presented in this paper.
First, under the GRH, we not only improve the upper bound but also give the asymptotic behaviour of the dual basis of the log-cyclotomic-unit lattice when $q$ is a prime number.
With this asymptotic norm, we show that the success probability of the SGP algorithm has a better lower bound in this special case.
More precisely, when the prime $q$ is large enough, we improve the lower bound on the success probability from $1-(q-3)e^{-t/2}$ with $t=\Theta (\sqrt{q}/\log\log q)$ to $1-(q-3)e^{-t/2}$ with $t=\Theta (\sqrt{q})$.

As a result, we also present the employment of the negative square moment of Dirichlet $L$-functions in cyclotomic fields.
Our second contribution is the very first rigorous analysis of the negative moments of Dirichlet $L$-functions, by which we are able to prove our main result on the dual basis.

The rest of this paper is structured as follows.
A brief background on lattices, number theory, and the prior works is recalled in Section~\ref{sec:pre}.
In Section~\ref{sec:main}, we state the main theorem, which implies an application to the SGP algorithm in Section~\ref{sec:prob}.
In Section~\ref{sec:negative}, we analyse negative moments of $L$-functions for the preparation of proving the main theorem, whose proof is given in Section~\ref{sec:pf}.

\section{Preliminaries}
\label{sec:pre}

We denote by $\langle\cdot ,\cdot\rangle$ the standard inner product over $\C^n$ (or $\R^n$), and by $\lVert\cdot\rVert$ the Euclidean norm.

\subsection{Ideal lattices and lattice problems}

Let $K$ be a number field, i.e., a field extension over $\Q$ with finite degree $[K:\Q]$.
The ring of integers $\mathcal{O}_K$ of $K$ consists of the algebraic integers in the field $K$, and its invertible elements $\mathcal{O}_K^*$ form a group called the unit group.

In this paper, we consider the cyclotomic number fields, i.e., a field extension $K=\Q (\zeta)$ over $\Q$ for some root of unity $\zeta$.
Then the ring of integers has the form $\mathcal{O}_K=\Z [\zeta]$.
$K$ is said to be the $q$-th cyclotomic field if $\zeta$ is the primitive $q$-th root of unit, and then the degree of $K$ is $[K:\Q]=\varphi (q)$, where $\varphi (\cdot)$ is the Euler totient function.

Let $G=\Z_q^*/\{\pm 1\}$ be a group identified with some set of canonical representatives in $\Z_q^*$.
The logarithm embedding of $K$ is defined as
\[\operatorname{Log}: K\rightarrow\R^{\varphi (q)/2},\quad\Log{a}=(\log\abs{\sigma_j(a)})_{j\in G}\quad\text{for all }a\in K,\]
where $\sigma_j$ are the complex embeddings of $K$.

\paragraph{\bf Ideal lattices}
By Dirichlet's unit theorem, $\Log{\mathcal{O}_K^*}$ is a lattice in $\R^{\varphi (q)/2}$ with rank $\varphi (q)/2-1$, i.e., its basis consists of $\varphi (q)/2-1$ linearly independent vectors in $\R^{\varphi (q)/2}$.
The lattices formed by the fractional ideals of (subrings of) the ring of integers $\mathcal{O}_K$ are called the ideal lattices.
In practice, since it is more efficient than dealing with general lattices, the algorithms for lattice-based cryptosystems usually deal with ideal lattices.

Listed below are some lattice problems that are related to our result.

\paragraph{\bf Bounded-distance decoding (BDD)}

Given a lattice basis $\B\subset\R^n$ of the lattice $\LL =\LL (\B)$ and a target point $\mathbf{t}\in\operatorname{span}(\LL)$ with the guarantee that $\min_{\mathbf{v}\in\LL}\lVert\mathbf{v}-\mathbf{t}\rVert\leq r$ for some known $r<\lambda_1(\LL)/2$, where $\lambda_1(\LL)$ denotes the length of the shortest vector in the lattice, find the unique $\mathbf{v}\in\LL$ such that $\lVert\mathbf{v}-\mathbf{t}\rVert\leq r$ (i.e., closest to $\mathbf{t}$).

There is an efficient algorithm solving BDD proposed by Babai~\cite{Bab86}.
We recall that the dual basis $\B^{\vee}=\{\bb_1^{\vee},\dots ,\bb_k^{\vee}\}$ of the lattice $\LL (\B)$ is defined as $\langle\bb_j^{\vee},\bb_{j^{\prime}}\rangle =\delta_{j, j^{\prime}}$.

\begin{thm}[{\cite[Claim 2.1]{CDPR15}}]
\label{thm:Babai}
    Let $\LL\subset\R^n$ be a lattice with a basis $\B$, and let $\mathbf{t}=\mathbf{v}+\mathbf{e}\in\R^n$ for some $\mathbf{v}\in\LL$, $\mathbf{e}\in\R^n$.
    If $\langle\bb_j^{\vee},\mathbf{e}\rangle\in [-\frac{1}{2},\frac{1}{2})$ for all $j$, then on input $\mathbf{t}$ and basis $\B$, Babai's round-off algorithm outputs $\mathbf{v}$.
\end{thm}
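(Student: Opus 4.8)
The plan is to unwind the definition of Babai's round-off algorithm and to use the dual-basis pairing to decouple the error contributions coordinate by coordinate. Write $\mathbf{t}=\mathbf{v}+\mathbf{e}$ with $\mathbf{v}\in\LL$ and expand $\mathbf{v}=\sum_j a_j\bb_j$ with $a_j\in\Z$, since $\{\bb_j\}$ is a basis of $\LL$. Babai's algorithm, on input $\mathbf{t}$ and $\B$, computes the (real) coefficients $c_j$ such that $\mathbf{t}=\sum_j c_j\bb_j$, rounds each $c_j$ to the nearest integer $\lfloor c_j\rceil$, and outputs $\mathbf{v}':=\sum_j\lfloor c_j\rceil\bb_j$. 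The claim is that $\mathbf{v}'=\mathbf{v}$, and for this it suffices to show $\lfloor c_j\rceil=a_j$ for every $j$.

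The key step is to read off $c_j-a_j$ via the dual basis. First I would note that pairing $\mathbf{t}=\sum_{j'}c_{j'}\bb_{j'}$ against $\bb_j^{\vee}$ and using $\langle\bb_j^{\vee},\bb_{j'}\rangle=\delta_{j,j'}$ gives $c_j=\langle\bb_j^{\vee},\mathbf{t}\rangle$; likewise $a_j=\langle\bb_j^{\vee},\mathbf{v}\rangle$. Subtracting, and using $\mathbf{t}-\mathbf{v}=\mathbf{e}$ together with bilinearity of the inner product, yields
\[
c_j-a_j=\langle\bb_j^{\vee},\mathbf{t}-\mathbf{v}\rangle=\langle\bb_j^{\vee},\mathbf{e}\rangle .
\]
By hypothesis $\langle\bb_j^{\vee},\mathbf{e}\rangle\in[-\tfrac12,\tfrac12)$, so $c_j-a_j\in[-\tfrac12,\tfrac12)$, and since $a_j\in\Z$ this forces $\lfloor c_j\rceil=a_j$ under the convention that ties round toward the value giving a remainder in $[-\tfrac12,\tfrac12)$. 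Summing over $j$ gives $\mathbf{v}'=\sum_j a_j\bb_j=\mathbf{v}$, as required.

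There is essentially no analytic obstacle here; the only point demanding care is the bookkeeping of the half-open interval $[-\tfrac12,\tfrac12)$ versus the rounding convention, so that the boundary case $c_j-a_j=-\tfrac12$ is handled consistently (this is why the interval in the statement is half-open). One should also note implicitly that $\mathbf{t}\in\operatorname{span}(\LL)$ is needed only to guarantee that the coefficients $c_j$ are well defined — in our application $\mathbf{e}$ will lie in $\operatorname{span}(\LL)$, so this is automatic. I expect the "hard part" to be purely expository: making the description of the round-off algorithm precise enough that the one-line identity $c_j-a_j=\langle\bb_j^{\vee},\mathbf{e}\rangle$ does all the work.
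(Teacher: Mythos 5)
Your argument is correct, and it is the standard proof of this fact; note, however, that the paper itself gives no proof of Theorem~\ref{thm:Babai} --- it is imported verbatim as Claim~2.1 of~\cite{CDPR15}, so there is nothing in the text to compare against. Your chain of identities $c_j=\langle\bb_j^{\vee},\mathbf{t}\rangle$, $a_j=\langle\bb_j^{\vee},\mathbf{v}\rangle\in\Z$, hence $c_j-a_j=\langle\bb_j^{\vee},\mathbf{e}\rangle\in[-\tfrac12,\tfrac12)$ and therefore rounding recovers $a_j$, is exactly the one-line mechanism behind Babai's round-off, and your remarks on the rounding/tie-breaking convention matching the half-open interval and on $\mathbf{t}$ lying in $\operatorname{span}(\LL)$ are the right caveats. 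One small refinement you could add: even when $\LL$ is not full rank (as in the log-cyclotomic-unit lattice, which has rank $\varphi(q)/2-1$ inside $\R^{\varphi(q)/2}$), the quantities $c_j:=\langle\bb_j^{\vee},\mathbf{t}\rangle$ are still well defined because the dual vectors $\bb_j^{\vee}$ live in $\operatorname{span}(\LL)$, so any component of $\mathbf{e}$ orthogonal to the span is annihilated by the pairing; this makes the ``$\mathbf{t}\in\operatorname{span}(\LL)$'' proviso unnecessary and slightly strengthens the statement as you have formulated it.
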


\paragraph{\bf Shortest generator problem (SGP)}

Given a generator $h$ of the principal ideal $h\mathcal{O}_K$, recover a generator $g$ of it, i.e., $h\mathcal{O}_K=g\mathcal{O}_K$, with sufficiently short Euclidean norm on its logarithmic embedding.

\subsection{Cyclotomic units and Dirichlet $L$-functions}

In this subsection, we recall the definition of cyclotomic units and Dirichlet $L$-functions, and the relation between them, as an essential observation, is introduced in~\cite{CDPR15}.

\paragraph{\bf Cyclotomic units}
Here we only consider the cyclotomic fields $\Q (\zeta_q)$, where $q$ is a prime power.
For general cyclotomic units, see, for example,~\cite{Was97}.
Define
\[b_j=\frac{\zeta_q^j-1}{\zeta_q-1}.\]
The subgroup $\CycUnitGp =\langle -1,\zeta_q, b_j\mid j\in G\setminus\{ 1\}\rangle <\mathcal{O}_K^*$ of the multiplicative group is called the cyclotomic unit group, and the elements in $\CycUnitGp$ are called the cyclotomic units (c.f.~\cite{Was97}).
It follows that the logarithm embedding $\Log{\CycUnitGp}$ is a sublattice of the log-unit lattice $\Log{\mathcal{O}_K^*}$.
As in~\cite{CDPR15}, we assume that the index $[\Log{\mathcal{O}_K^*}:\Log{\CycUnitGp}]$ is small.
It is clear that the vectors defined as
\[\bb_j=\Log{b_j}=\Log{b_{-j}},\quad j\in G\setminus\{ 1\}\]
form a basis of $\Log{\CycUnitGp}$ (c.f.~\cite{CDPR15}).

\paragraph{\bf Dirichlet $L$-function}
Let $q$ be a positive integer. Let $\chi$ be a Dirichlet character modulo $q$. The corresponding Dirichlet $L$-function is defined to be 
\[
L(s,\chi):=\sum_{n=1}^\infty \frac{\chi(n)}{n^s}
\]
for the real part of $s$, $\si>1$, and it can be continued analytically over the whole plane, except for the possible pole at $s=1$. 

See~\cite{CDPR15} for the way to relate the basis vectors $\bb_j$ to Dirichlet $L$-functions using a certain $G$-circulant matrix and Dirichlet characters.

\subsection{Prior works}

In this subsection, we recall the results on the SGP algorithm in cyclotomic number fields from~\cite{CDPR15}.
Note that we use the notation $q$ for the $q$-th cyclotomic field.

\begin{thm}[{\cite[Theorem 4.1]{CDPR15}}]
\label{thm:313algo}
    Let $D$ be a distribution over $\Q(\zeta)$ with the property that for any tuple of vectors $\vv_1,\dots ,\vv_{\varphi (q)/2-1}\in\R^{\varphi (q)/2}$ of Euclidean norm 1 that are orthogonal to the all-1 vector $\boldsymbol{1}$, the probability that $\abs{\langle\Log{g},\mathbf{v}_i\rangle}<c\sqrt{q}\cdot (\log q)^{-3/2}$ holds for all $i$ is at least some $\alpha >0$, where $g$ is chosen from $D$ and $c$ is a universal constant.
    Then there is an efficient algorithm that given $g^\prime =g\cdot u$, where $g$ is chosen from $D$ and $u\in\CycUnitGp$ is a cyclotomic unit, outputs an element of the form $\pm\zeta^jg$ with probability at least $\alpha$.
\end{thm}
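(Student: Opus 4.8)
The plan is to recognise the algorithm as Babai's round-off decoding against the cyclotomic-unit basis $\B=\{\bb_j\}_{j\in G\setminus\{1\}}$ of $\Lambda:=\Log{\CycUnitGp}$, and then to trade the decoding-success condition for the hypothesis on $D$ by feeding in an upper bound on the dual basis $\B^{\vee}$. Recall first that every unit of $\mathcal{O}_K$ has absolute norm $1$ and $K=\Q(\zeta)$ is totally complex, so $\Log{\mathcal{O}_K^{*}}$, and hence $\Lambda$, lies in the hyperplane $\boldsymbol{1}^{\perp}\subset\R^{\varphi(q)/2}$; since $\operatorname{rank}\Lambda=\varphi(q)/2-1=\dim\boldsymbol{1}^{\perp}$, the lattice $\Lambda$ is full-rank in $\boldsymbol{1}^{\perp}$, so $\operatorname{span}(\Lambda)=\boldsymbol{1}^{\perp}$ and each $\bb_j^{\vee}$ lies in $\boldsymbol{1}^{\perp}$. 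On input $g'=g\cdot u$ the algorithm will (i) compute $\Log{g'}=\Log{g}+\Log{u}$ to sufficient precision and let $\mathbf t$ be its orthogonal projection onto $\boldsymbol{1}^{\perp}$; (ii) run Babai's round-off algorithm on $\mathbf t$ with the precomputed basis $\B$, returning integers $(a_j)_{j}$ with decoded point $\sum_j a_j\bb_j$; (iii) form $u':=\prod_j b_j^{\,a_j}\in\CycUnitGp$ and output $g'/u'$. Since $|a_j|\le\lVert\bb_j^{\vee}\rVert\,\lVert\mathbf t\rVert+\tfrac12$ is polynomial in the input size, every step runs in polynomial time (for the arithmetic in step (iii), see~\cite{CDPR15}).

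For correctness, note that $\Log{u}\in\Lambda$, so $\mathbf t=\mathbf v+\mathbf e$ with $\mathbf v:=\Log{u}$ and $\mathbf e$ the orthogonal projection of $\Log{g}$, and $\langle\bb_j^{\vee},\mathbf e\rangle=\langle\bb_j^{\vee},\Log{g}\rangle$ because $\bb_j^{\vee}\in\boldsymbol{1}^{\perp}$. Theorem~\ref{thm:Babai} then gives: whenever
\[ \bigl|\langle\bb_j^{\vee},\Log{g}\rangle\bigr|<\tfrac12\qquad\text{for all }j\in G\setminus\{1\}, \]
step (ii) decodes $\mathbf t$ to $\mathbf v=\Log{u}$, hence $\Log{u'}=\sum_j a_j\bb_j=\Log{u}$, so $\Log{u/u'}=0$: the unit $u/u'$ has all its archimedean absolute values equal to $1$, so it is a root of unity by Kronecker's theorem, and $q$ being a prime power forces $u/u'=\pm\zeta^{j}$ for some $j$, whence $g'/u'=\pm\zeta^{j}g$ is of the required form. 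It therefore suffices to show the displayed event occurs with probability at least $\alpha$.

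To that end, normalise: put $\vv_j:=\bb_j^{\vee}/\lVert\bb_j^{\vee}\rVert$, a unit vector orthogonal to $\boldsymbol{1}$; there are exactly $\varphi(q)/2-1$ of them. Using $\langle\bb_j^{\vee},\Log{g}\rangle=\lVert\bb_j^{\vee}\rVert\,\langle\vv_j,\Log{g}\rangle$ together with the bound $\lVert\bb_j^{\vee}\rVert\le\tfrac{1}{2c}\,q^{-1/2}(\log q)^{3/2}$ for all $j$ — valid for a suitable absolute constant $c$, by the dual-basis estimate discussed below — one obtains the implication
\[ \bigl|\langle\vv_j,\Log{g}\rangle\bigr|<c\sqrt q\,(\log q)^{-3/2}\ \text{for all }j\ \Longrightarrow\ \bigl|\langle\bb_j^{\vee},\Log{g}\rangle\bigr|<\tfrac12\ \text{for all }j. \]
Applying the hypothesis on $D$ to the tuple $\vv_1,\dots,\vv_{\varphi(q)/2-1}$ bounds the probability of the left-hand event below by $\alpha$, hence so is the probability of the success event; this completes the argument.

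The one non-routine ingredient — and where I expect the real work to lie — is the dual-basis estimate $\lVert\bb_j^{\vee}\rVert=O\!\bigl(q^{-1/2}(\log q)^{3/2}\bigr)$. Following~\cite{CDPR15}, I would diagonalise the $G$-circulant Gram matrix of $\B$ by the characters of $G$, i.e.\ the nontrivial even Dirichlet characters $\chi$ modulo $q$; its eigenvalues are, up to elementary factors (Gauss sums, a power of $q$, and twists $1-\chi(j)$ of absolute value at most $2$), the quantities $|L(1,\chi)|^{2}$, so inverting and applying Parseval yields
\[ \lVert\bb_j^{\vee}\rVert^{2}=O\!\Bigl(q^{-1}\cdot\tfrac{1}{\varphi(q)}\sum_{\chi}|L(1,\chi)|^{-2}\Bigr), \]
a scaled \emph{negative second moment} of Dirichlet $L$-functions at $s=1$. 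For the present statement even the crude pointwise bound $|L(1,\chi)|^{-1}=O(\log q)$ is enough; what Section~\ref{sec:negative} establishes is that this moment is actually of bounded order on average, and harnessing that is precisely what, in Section~\ref{sec:prob}, relaxes the admissible threshold and improves the lower bound on the success probability.
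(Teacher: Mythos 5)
Your proof is correct and mirrors the argument the paper itself uses for the closely analogous Corollary~\ref{cor:algo}, which the paper explicitly models on~\cite[Theorem 4.1]{CDPR15}: run Babai round-off against the cyclotomic-unit basis, invoke Theorem~\ref{thm:Babai} to reduce correctness to $\abs{\langle\bb_j^\vee,\Log{g}\rangle}<1/2$ for all $j$, and convert that condition into the hypothesis on $D$ via the dual-basis bound. One small blemish in the motivational sketch at the end: the displayed Parseval identity omits the $1/f_\chi$ weights present in the exact prime-power formula, and that is exactly where the extra half-power of $\log q$ originates — but since you state the final bound $\lVert\bb_j^\vee\rVert=O(q^{-1/2}(\log q)^{3/2})$ correctly and defer to~\cite{CDPR15} for its derivation, the argument stands.
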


Remember that $g^\prime\in\Q (\zeta)$ is a generator of a principal (fractional) ideal so that $g^\prime\mathcal{O}_K=gu\mathcal{O}_K=g\mathcal{O}_K$.

To show the existence of the polynomial time algorithm, Reference~\cite{CDPR15} gave an upper bound on the length of the generators with the relation to $L(1,\chi)$, considered the candidate distributions, and applied Babai's algorithm~\cite{Bab86} for solving BDD.
The following theorem is a special case of the result given by~\cite{CDPR15} on the upper bound of $\lVert\bb_j^\vee\rVert$ when $q$ is a prime number.

\begin{thm}[{\cite[Theorem 3.1]{CDPR15}}]
\label{thm:313b}
    Let $q$ be a prime number. Then all $\lVert\bb_j^\vee\rVert$ are equal, and $\lVert\bb_j^\vee\rVert^2\leq 2\abs{G}^{-1}\cdot\left(\ell (q)^2+O(1)\right)$, where $\ell(q)$ satisfies that $L(1,\chi) \geq 1/\ell(q)$.
\end{thm}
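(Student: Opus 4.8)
The plan is to make the relation between the basis $\{\bb_j\}$ and Dirichlet $L$-functions completely explicit, by realising $\Log{\CycUnitGp}$ as the image, under a single $G$-circulant matrix, of a fixed copy of the rank-$(\abs G-1)$ lattice $\{\mathbf x\in\Z^G:\sum_k x_k=0\}$, and then reading off the dual basis and its norm. Fix the representatives $G=\{1,2,\dots,(q-1)/2\}$, and for $m\in G$ put $a(m)=\log\abs{\zeta_q^m-1}$ (well defined, since $\abs{\zeta_q^m-1}=\abs{\zeta_q^{-m}-1}$); let $A=\bigl(a(jk)\bigr)_{j,k\in G}$, a real symmetric matrix indexed by $G$. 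Since $\si_k(b_j)=(\zeta_q^{jk}-1)/(\zeta_q^k-1)$, one has $\bb_j=A(\mathbf e_j-\mathbf e_1)$ for $j\in G\setminus\{1\}$, where $\mathbf e_m$ is the standard basis vector of $\R^G$. Hence $\Log{\CycUnitGp}=A\cdot M$, where $M=\{\mathbf x\in\Z^G:\sum_k x_k=0\}=\operatorname{span}_\Z\{\mathbf e_j-\mathbf e_1:j\in G\setminus\{1\}\}$; both $M$ and $\Log{\CycUnitGp}$ lie in the hyperplane $H=\mathbf 1^\perp$.

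The matrix $A$ is a real symmetric $G$-circulant, and in particular its square $A^2=AA^{\mathsf T}$ is diagonalised by the characters of $G$, which are exactly the even Dirichlet characters modulo $q$; the eigenvalue of $A^2$ attached to $\chi$ is $\la_\chi^2$, where $\la_\chi=\sum_{m\in G}a(m)\chi(m)$. For the principal character, $\prod_{k\in\Z_q^*}(1-\zeta_q^k)=\Phi_q(1)=q$ gives $\la_{\chi_0}=\tfrac12\log q$ (and $A\mathbf 1=\la_{\chi_0}\mathbf 1$); for $\chi\neq\chi_0$, automatically primitive as $q$ is prime, the classical closed formula for $L(1,\chi)$ at an even primitive character (see e.g.~\cite{Was97}) gives $\abs{\la_\chi}=\tfrac{\sqrt q}{2}\abs{L(1,\chi)}$. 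Since $L(1,\chi)\neq 0$ we have $\la_\chi\neq 0$ for all $\chi$, so $A$ is invertible and restricts to a self-adjoint automorphism of $H$.

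Because $A$ is self-adjoint and $\bb_j=A\,\mathbf f_j$ with $\mathbf f_j=\mathbf e_j-\mathbf e_1$, the dual basis in $H$ is $\bb_j^\vee=A^{-1}\mathbf f_j^\vee$, where $\{\mathbf f_j^\vee\}$ is the dual of $\{\mathbf f_j\}$ in $H$; a direct check gives $\mathbf f_j^\vee=\mathbf e_j-\abs G^{-1}\mathbf 1$, so $\bb_j^\vee$ is the orthogonal projection onto $H$ of the $j$-th column of $A^{-1}$. Thus $\lVert\bb_j^\vee\rVert^2=(A^{-2})_{jj}-\abs G^{-1}\la_{\chi_0}^{-2}$, and since $A^{-2}$ is again a circulant it has constant diagonal equal to $\abs G^{-1}\operatorname{tr}(A^{-2})=\abs G^{-1}\sum_\chi\la_\chi^{-2}$; therefore
\[
\lVert\bb_j^\vee\rVert^2=\frac{1}{\abs G}\sum_{\substack{\chi \bmod q\\ \chi(-1)=1,\ \chi\neq\chi_0}}\frac{1}{\la_\chi^{2}},
\]
which is the same for every $j\in G\setminus\{1\}$. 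Substituting $\abs{\la_\chi}^2=\tfrac q4\abs{L(1,\chi)}^2$ and $\abs{L(1,\chi)}\geq 1/\ell(q)$ over the $\abs G-1$ even non-principal characters yields $\lVert\bb_j^\vee\rVert^2\leq \frac{4(\abs G-1)}{q\abs G}\,\ell(q)^2\leq\frac{2}{\abs G}\,\ell(q)^2\leq 2\abs G^{-1}\bigl(\ell(q)^2+O(1)\bigr)$, as claimed.

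The conceptual crux is the identification $\Log{\CycUnitGp}=A\cdot M$: once it is in place, the self-adjointness of $A$ forces $\bb_j^\vee$ to be a projection of a column of $A^{-1}$, the circulant symmetry forces the norm to be $j$-independent, and the closed form in terms of $\sum_\chi\la_\chi^{-2}$, hence the final inequality, drops out. The step requiring genuine care is pinning down the exact constant in $\abs{\la_\chi}=\tfrac{\sqrt q}{2}\abs{L(1,\chi)}$, which means tracking several normalisations simultaneously — the quotient $G=\Z_q^*/\{\pm 1\}$ versus $\Z_q^*$, the modulus $\abs{\tau(\chi)}=\sqrt q$ of a primitive Gauss sum, and the conjugation and sign in the standard formula for $L(1,\chi)$ — and it is there, and in verifying that $A^2$ (equivalently the Gram matrix $\langle\bb_j,\bb_{j'}\rangle$) is genuinely diagonalised by the even characters, that an overlooked factor of $2$ would do the most damage.
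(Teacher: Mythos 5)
Your proof is correct and follows essentially the same route as the cited source~\cite{CDPR15}: realise $\Log{\CycUnitGp}$ as the image of a fixed integer lattice under a symmetric matrix built from $\log\abs{\zeta^{jk}-1}$, diagonalise its square via the even characters of $\Z_q^*$, read off the dual basis from $A^{-1}$, and invoke the classical closed form relating $\sum_{m\in G}a(m)\chi(m)$ to $L(1,\chi)$ via the Gauss sum; the resulting identity $\lVert\bb_j^\vee\rVert^2=\tfrac{4}{\abs G}\sum_{\chi\neq\chi_0,\,\chi(-1)=1}\tfrac{1}{q\abs{L(1,\chi)}^2}$ is exactly the one this paper later quotes from~\cite{CDPR15} in Section~\ref{sec:pf}. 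One small notational slip worth fixing: with $A_{jk}=a(jk)$ one has $A\chi=\la_\chi\overline\chi$, hence $A^2\chi=\la_\chi\la_{\overline\chi}\chi=\abs{\la_\chi}^2\chi$, so the eigenvalue of $A^2$ attached to $\chi$ is $\abs{\la_\chi}^2$ rather than $\la_\chi^2$ (and $A$ itself is a symmetric reverse-circulant, with $A^2$ the genuine $G$-circulant); your subsequent substitution $\abs{\la_\chi}^2=\tfrac q4\abs{L(1,\chi)}^2$ already uses the correct modulus, so the conclusion is unaffected.
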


The role of $L$-functions in the proof is to obtain bounds on $L(1,\chi)^{-1}$ from the well-known bounds $1/\ell (q)\leq L(1,\chi)\leq\ell (q)$.
The existence of such a distribution $D$ in Theorem~\ref{thm:313algo} is shown by the tail bounds of Gaussian distributions.

\begin{lem}[{\cite[Lemma 5.4]{CDPR15}}]
\label{lem:313prob}
    Let $X=1,\dots , X_n, X^\prime_n,\dots , X^\prime_n$ be i.i.d. $N(0, r)$ variables for some $r>0$, and let $\hat{X}_i=(X_i^2+X_i^{\prime 2})^{1/2}$.
    Then, for any vectors $\mathbf{a}^{(1)},\dots ,\mathbf{a}^{(\ell)}\in\R^n$ of Euclidean norm 1 that are orthogonal to the all-1 vector, and every $t\geq C_r$ for some universal constant $C_r$, $\prob{\exists j,\abs{\sum_ia_i^{(j)}\log (\hat{X}_i)}\geq t}\leq 2\ell\exp{(-t/2)}$.
\end{lem}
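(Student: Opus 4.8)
The plan is to observe that each $Y_i:=\log\hat X_i$ is a sub-exponential random variable, and then to combine a Chernoff bound for the linear combination $\sum_i a_i^{(j)}Y_i$ with a union bound over the $\ell$ vectors $\mathbf a^{(1)},\dots,\mathbf a^{(\ell)}$.

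First I would exploit the orthogonality to the all-$1$ vector in order to \emph{centre} the sum. Since the $\hat X_i$ are identically distributed, $\mu_r:=\mathbb{E}[\log\hat X_1]$ is a finite constant depending only on $r$, and because $\sum_i a_i^{(j)}=0$ we may write
\[
\sum_i a_i^{(j)}\log\hat X_i=\sum_i a_i^{(j)}Z_i=:S_j,\qquad Z_i:=Y_i-\mu_r,
\]
so that the $Z_i$ are i.i.d.\ with mean $0$ and the unknown constant $\mu_r$ has disappeared. This is the only place where the orthogonality hypothesis is used; everything below is a statement about the centred sum $S_j$.

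Next I would bound the moment generating function of $Z_i$. Here $\hat X_i^2=X_i^2+X_i'^2$ is a scalar multiple of a $\chi^2_2$-variable, equivalently $\hat X_i$ is a scaled Rayleigh variable whose density vanishes linearly at the origin and has Gaussian-type decay at infinity; consequently $\mathbb{E}[e^{\lambda Y_i}]=\mathbb{E}[\hat X_i^{\lambda}]$ is finite for every $\lambda>-2$ (explicitly it equals $(2r)^{\lambda/2}\Gamma(1+\tfrac{\lambda}{2})$). In particular $M_Z(\lambda):=\mathbb{E}[e^{\lambda Z_i}]$ is finite and smooth on $[-1,1]$ with $M_Z(0)=1$ and $M_Z'(0)=\mathbb{E}[Z_i]=0$, so Taylor's theorem gives $\log M_Z(\lambda)\le K_r\lambda^2$ for all $\lambda\in[-1,1]$, where $K_r:=\tfrac12\sup_{|\lambda|\le 1}(\log M_Z)''(\lambda)<\infty$ depends only on $r$. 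By independence, $\mathbb{E}[e^{\lambda S_j}]=\prod_i M_Z(\lambda a_i^{(j)})$, and since $|a_i^{(j)}|\le\lVert\mathbf a^{(j)}\rVert=1$ we obtain, for all $|\lambda|\le 1$,
\[
\mathbb{E}[e^{\lambda S_j}]\le\prod_i e^{K_r\lambda^2(a_i^{(j)})^2}=e^{K_r\lambda^2\lVert\mathbf a^{(j)}\rVert^2}=e^{K_r\lambda^2}.
\]
Applying Markov's inequality to $e^{S_j}$ and to $e^{-S_j}$ (i.e.\ a Chernoff bound with $\lambda=\pm1$) yields $\prob{\pm S_j\ge t}\le e^{-t+K_r}$, hence $\prob{|S_j|\ge t}\le 2e^{-t/2}$ as soon as $t\ge C_r:=2K_r$. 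A union bound over $j=1,\dots,\ell$ then gives $\prob{\exists j,\ |S_j|\ge t}\le 2\ell e^{-t/2}$, which is the claim.

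I expect the only genuinely delicate point to be the left tail of $Y_i$, that is, the integrability of $\hat X_i^{\lambda}$ for negative $\lambda$: one must verify that $\hat X_i$ does not concentrate too strongly near $0$ (the right tail of $Y_i$ is harmless, as $\hat X_i$ has Gaussian-type upper tails). This is precisely where the two-dimensional nature of $\hat X_i=(X_i^2+X_i'^2)^{1/2}$ helps, since the resulting density vanishes at the origin and leaves ample room to take $\lambda=-1$. The constant $1/2$ in the exponent is not sharp; it is merely a convenient choice, with the slack absorbed into the threshold $C_r$ through a crude optimisation over $\lambda$.
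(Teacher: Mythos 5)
The paper does not actually prove this lemma: it is imported verbatim from \cite[Lemma 5.4]{CDPR15}, so there is no in-paper proof to compare against. That said, your argument is a correct and complete proof, and it runs along the lines one would expect: use $\sum_i a_i^{(j)}=0$ to centre, observe $\mathbb{E}[\hat X_i^{\lambda}]=(2r)^{\lambda/2}\Gamma(1+\lambda/2)$ is finite for $\lambda>-2$ so that $Z_i$ is sub-exponential, bound the cumulant generating function by $K_r\lambda^2$ on $[-1,1]$ via Taylor, factor the MGF of $S_j$ by independence and use $\lVert\mathbf a^{(j)}\rVert=1$, then Chernoff at $\lambda=\pm1$ plus a union bound. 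All the steps check out, including the delicate left-tail integrability (the Rayleigh density vanishes linearly at $0$, so $\lambda=-1$ is safely inside the domain). One small refinement you could have noted: after centring, $Z_i=\log\hat X_i-\mu_r$ is independent of $r$ (scale $X_i=\sqrt r\,W_i$ and see the $\tfrac12\log r$ cancel against $\mu_r$), so the threshold you call $C_r$ is in fact a universal constant $C$, as in the original statement of \cite{CDPR15}; this paper's choice to write $C_r$ is a harmless overstatement of the dependence.
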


\section{Asymptotic norm of dual basis vectors}
\label{sec:main}

In this section, we present the statement of our main result on the norm of dual basis $\bb_j^{\vee}$ of the log-cyclotomic-unit lattice $\Log{\CycUnitGp}$, whose proof will be given in Section~\ref{sec:pf}.
More precisely, by investigating the following negative square moment of Dirichlet $L$-function at $s=1$, namely,
\begin{align*}
    \sum_{\substack{\chi \neq \chi_0 \\ \chi(-1)=1}} \frac{1}{f_\chi \abs{L(1,\chi)}^2},
\end{align*}
where $f_\chi$ is the conductor of $\chi$, we obtain the asymptotic behavior of $\lVert \bb_j^\vee \rVert^2$ when $q$ is a prime number.
%unit rank $k$

\begin{thm}
\label{thm:b}
Let $q$ be a prime number. Then all $\lVert \bb_j^\vee \rVert$ are equal, and moreover,
under the generalized Riemann Hypothesis (GRH), we have
\begin{align*}
    \lVert \bb_j^\vee \rVert^2 = \frac{4\zeta(2)}{\zeta(4)} \frac{1}{q}\left(1+O\left( q^{-1+\eps} \right)\right) \quad \text{ as}\quad q \to \infty.
\end{align*}
\end{thm}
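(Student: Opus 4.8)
The plan is to separate the argument into an algebraic reduction, which expresses $\lVert\bb_j^\vee\rVert^2$ exactly in terms of a negative square moment of $L(1,\chi)$, and an analytic evaluation of that moment under GRH; the moment is where essentially all the work lies. For the reduction, in the spirit of the $G$-circulant analysis of~\cite{CDPR15}, I would decompose $\R^{\varphi(q)/2}\cong\R^{G}$ into the orthogonal eigenlines spanned by $\mathbf e_\chi=(\chi(k))_{k\in G}$, with $\chi$ ranging over the even Dirichlet characters modulo $q$ (each descends to $G$): the principal character $\chi_0$ spans $\R\boldsymbol 1$, and the remaining $\abs{G}-1$ vectors span the complexification of the real span of $\Log{\CycUnitGp}$. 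Writing $\bb_j=\sum_{\chi\ne\chi_0}c_{j,\chi}\mathbf e_\chi$ and using $\sigma_k(b_j)=(\zeta_q^{jk}-1)/(\zeta_q^{k}-1)$ together with the invariance of $\log\abs{\sigma_k(b_j)}$ under $k\mapsto -k$, one obtains
\[
c_{j,\chi}=\frac{\chi(j)-1}{2\abs{G}}\sum_{a\in\Z_q^{*}}\bar\chi(a)\log\abs{1-\zeta_q^{a}}=:\frac{\chi(j)-1}{2\abs{G}}\,s_\chi .
\]

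Kummer's closed form for $L(1,\chi)$ at even $\chi$ (see~\cite{Was97}) together with $\abs{\tau(\chi)}^{2}=q$ — valid because $q$ is prime, so every $\chi\ne\chi_0$ is primitive of conductor $f_\chi=q$ — gives $\abs{s_\chi}^{2}=q\abs{L(1,\chi)}^{2}$. Since the Gram matrix of $(\bb_j)_{j\ne 1}$ is $\abs{G}\,CC^{*}$ with $C=\frac{1}{2\abs{G}}E\operatorname{diag}(s_\chi)$ and $E=(\chi(j)-1)_{j,\chi}$, and since the orthogonality identity
\[
\sum_{\substack{\chi\ne\chi_0\\\chi(-1)=1}}(\chi(j)-1)\,\bar\chi(j')=\abs{G}\,\delta_{j,j'}\qquad(j,j'\in G\setminus\{1\})
\]
shows $E^{-1}_{\chi,j'}=\bar\chi(j')/\abs{G}$, the dual Gram matrix $\frac{1}{\abs{G}}(CC^{*})^{-1}$ has constant diagonal
\[
\lVert\bb_j^\vee\rVert^{2}=\frac{4}{q\abs{G}}\sum_{\substack{\chi\ne\chi_0\\\chi(-1)=1}}\frac{1}{\abs{L(1,\chi)}^{2}}=\frac{4}{\abs{G}}\sum_{\substack{\chi\ne\chi_0\\\chi(-1)=1}}\frac{1}{f_\chi\abs{L(1,\chi)}^{2}},
\]
which gives the first assertion of Theorem~\ref{thm:b}; since $\abs{G}=(q-1)/2$, the second assertion is equivalent to
\[
\sum_{\substack{\chi\ne\chi_0\\\chi(-1)=1}}\frac{1}{\abs{L(1,\chi)}^{2}}=\frac{\zeta(2)}{\zeta(4)}\cdot\frac{q-1}{2}\bigl(1+O(q^{-1+\eps})\bigr).
\]

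For this moment, assuming GRH, I would approximate $1/L(1,\chi)$ by $P_\chi:=\sum_{n\le y}\mu(n)\chi(n)/n$ with $y=q^{3}$ (any fixed power of $q$ of size at least $q^{2}$ suffices). Under GRH, $\sum_{n\le x}\mu(n)\chi(n)\ll_\eps (xq)^{\eps}x^{1/2}$, so partial summation gives $1/L(1,\chi)=P_\chi+O(q^{-3/2+\eps})$; combined with Littlewood's conditional bound $\abs{L(1,\chi)}^{-1}\ll\log\log q$ this yields $\abs{L(1,\chi)}^{-2}=\abs{P_\chi}^{2}+O(q^{-3/2+\eps})$, hence $\sum_\chi\abs{L(1,\chi)}^{-2}=\sum_\chi\abs{P_\chi}^{2}+O(q^{-1/2+\eps})$ (all sums over even $\chi\ne\chi_0$). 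Expanding $\abs{P_\chi}^{2}=\sum_{m,n\le y}\mu(m)\mu(n)\chi(m)\bar\chi(n)/(mn)$ and using
\[
\sum_{\substack{\chi\ne\chi_0\\\chi(-1)=1}}\chi(m)\bar\chi(n)=\Bigl(\tfrac{q-1}{2}\bigl(\mathbf 1_{q\mid m-n}+\mathbf 1_{q\mid m+n}\bigr)-1\Bigr)\mathbf 1_{q\nmid mn},
\]
the diagonal $m=n$ produces the main term $\frac{q-1}{2}\sum_{n\le y,\,q\nmid n}\mu(n)^{2}/n^{2}=\frac{q-1}{2}\bigl(\zeta(2)/\zeta(4)+O(q^{-2})\bigr)$ after completing the series (the Euler product $\prod_{p}(1+p^{-2})=\zeta(2)/\zeta(4)$, removal of the factor at $q$, and $\sum_{n>y}n^{-2}\ll y^{-1}$). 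The off-diagonal congruence contributions — pairs with $q\mid m-n$, $m\ne n$ (so $\abs{m-n}=kq$, $1\le k\le y/q$) or $q\mid m+n$ (so $m+n=kq$, $1\le k\le 2y/q$) — I would bound trivially, one residue class at a time: each contributes $\ll k^{-1}q^{-1}\log q$ to $\sum\mu(m)\mu(n)/(mn)$, so after the weight $\tfrac{q-1}{2}$ and summation over $k$ these terms total $\ll(\log q)^{2}$. The term $-\mathbf 1_{q\nmid mn}$ contributes $-\bigl(\sum_{n\le y,\,q\nmid n}\mu(n)/n\bigr)^{2}=O(q^{-1})$ under GRH. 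Collecting these estimates gives the asserted moment asymptotic, and substituting back into the reduction formula proves Theorem~\ref{thm:b}.

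The step I expect to be the main obstacle is obtaining the power-saving relative error $q^{-1+\eps}$ rather than the essentially trivial bound used in~\cite{CDPR15}. This forces the Dirichlet polynomial $P_\chi$ to be longer than $q$, which is legitimate only under GRH: without it, $\sum_{n\le y}\mu(n)\chi(n)$ shows no usable cancellation once $y$ is a fixed power of $q$. Taking $y>q$ is precisely what produces the off-diagonal congruence terms above, whose total must be controlled against a main term of size $\asymp q$; GRH is also needed, through Littlewood's lower bound on $\abs{L(1,\chi)}$, to prevent the truncation cross-terms in $\abs{L(1,\chi)}^{-2}-\abs{P_\chi}^{2}$ from dominating. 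The cancellation $\sum_{n}\mu(n)/n=0$ I do not expect to be a source of difficulty — under GRH the truncated sum is a negative power of $q$ — but it is exactly the kind of lower-order contribution one must remember to account for.
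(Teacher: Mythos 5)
Your proof is correct, and the algebraic reduction to
\[
\lVert\bb_j^\vee\rVert^{2}=\frac{4}{\abs{G}}\sum_{\substack{\chi\ne\chi_0\\\chi(-1)=1}}\frac{1}{f_\chi\abs{L(1,\chi)}^{2}}
\]
is the same formula the paper uses (the paper simply cites~\cite{CDPR15}, whereas you re-derive it via the $G$-circulant eigenbasis and Kummer's formula, but the endpoint is identical). Where you take a genuinely different route is in evaluating the negative square moment. The paper proves a general $2k$-th negative moment (Theorem~\ref{thm:negative moment}): it applies partial summation to the Dirichlet series of $1/L(1,\chi)^{k}$ at cutoff $y=q^{2}$, keeps the tail as an integral, decomposes the resulting product into pieces $M_{1},\dots,M_{4}$, and bounds the tail pieces using a GRH estimate $\sum_{n\le x,\,n\equiv a\,(q)}r_{k}(n)\ll x^{1/2+\eps}$ (Lemma~\ref{lem:average sums of r_k mu}), itself obtained via Perron's formula and a contour shift with conditional bounds on $1/L(s,\chi)$. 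You instead work only at $k=1$: you truncate to $P_\chi=\sum_{n\le q^3}\mu(n)\chi(n)/n$, use the GRH bound $\sum_{n\le x}\mu(n)\chi(n)\ll(xq)^\eps x^{1/2}$ to get a power-saving truncation error, and control the cross-terms with Littlewood's conditional bound $\abs{L(1,\chi)}^{-1}\ll\log\log q$; the main term and the $O(q^\eps)$ off-diagonal contribution then come from the same even-character orthogonality (your displayed orthogonality identity is Lemma~\ref{lem:Character orthogonality} with the $\chi_0$ term subtracted). The two routes rest on essentially equivalent GRH inputs (for $k=1$ the twisted M\"obius bound and the arithmetic-progression bound of Lemma~\ref{lem:average sums of r_k mu} are interchangeable via orthogonality), but yours is more direct for the $k=1$ case and avoids the tail-integral bookkeeping, while the paper's formulation is organised to produce the general $2k$-th moment of Theorem~\ref{thm:negative moment} as a by-product.
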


Our Theorem~\ref{thm:b} is comparable with the prior work and gives an improvement on the estimation. %estimation??
In the case where $q$ is an odd prime, combining Theorem~\ref{thm:313b} and the estimation of $\Lconst$ from~\cite{LLS} and~\cite{LT22}, under the GRH, the result of~\cite{CDPR15} implies only
\[
\lVert \bb_j^\vee \rVert^2 \leq 4\Lconst^2 \frac{(\log\log q)^2}{q}(1+o(1)),  
\]
where $\Lconst =\frac{12e^\gamma}{\pi^2}$ and $\gamma$ is the Euler–Mascheroni constant. Notice that under the GRH, the function $\ell (q)$ in Theorem~\ref{thm:313b}
satisfies that $\ell (q)=\Lconst\log\log q$.

Applications of square moments of Dirichlet $L$-functions to cyclotomic fields are already known. Let $h_q$ be the class number of the cyclotomic field $\Q (\zeta_q)$, where $q$ is an odd prime number. Then it is well-known that
\[
h_q R_q = 2q^{\frac{q}{2}} (2\pi)^{-\frac{q-1}{2}} \prod_{\substack{\chi \bmod q \\ \chi \neq \chi_0}} \abs{L(1,\chi)},
\]
where $R_q$ is the regulator of $\Q (\zeta_q)$. From the earliest work on the square moment from Paley, Selberg (see \cite{Sel46}) and Ankeny and Chowla~\cite{AC51}, several papers giving estimates on the class number of a cyclotomic field, by this direction (cf. \cite{Wa82},\cite{S85,S86},\cite{Z90-2,Z90-1},\cite{Lo93}, \cite{KM94}). 

However, in this paper, we show that the negative square moment of Dirichlet $L$-functions are also applicable to cyclotomic fields, especially, the dual basis of the log-cyclotomic-unit lattice.

\section{Higher success probability for prime $q$}
\label{sec:prob}

In this section, we describe an implication obtained from our main result, Theorem~\ref{thm:b}.
Here we identify the elements of $K=\Q(\zeta)$ by their real and imaginary parts under complex embeddings, i.e., $(\re (\sigma_j(a)),\im (\sigma_j(a)))_{j\in G}$.

We denote by $D(t,\alpha)$ the distribution over $\Q(\zeta)$ with the property that for any tuple of vectors $\vv_1,\dots ,\vv_{\ell}\in\R^n$ of Euclidean norm 1 that are orthogonal to the all-1 vector $\boldsymbol{1}$, the probability that $\abs{\langle\Log{g},\mathbf{v}_i\rangle}<t$ holds for all $i=1,\dots ,\ell$ is at least some $\alpha >0$, where $g$ is chosen from $D(t,\alpha)$ and the parameter $t$ is positive.
Below, we specify the parameters for our setting of $\Q (\zeta_q)$.

\begin{cor}
    \label{cor:prob}
    Let $n=\varphi (q)/2$ and $\ell =n-1$.
    Then $D(t,\alpha)$ with $t=\frac{1}{4\sqrt{2}}\sqrt{\frac{\zeta (4)}{\zeta (2)}q}$ and $\alpha =1-(q-3)e^{-t/2}$ exists for Gaussian distributions that have variance $r$ if $t\geq C_r$.
\end{cor}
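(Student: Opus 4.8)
The plan is to realise $D(t,\alpha)$ as a Gaussian on $K_\R=K\otimes_\Q\R\cong\prod_{j\in G}\C$ and to read the required tail estimate directly off Lemma~\ref{lem:313prob}. I would take $g$ distributed so that the real and imaginary parts $\re\sigma_j(g)$ and $\im\sigma_j(g)$, $j\in G$, are i.i.d.\ $N(0,r)$ variables (the passage from this distribution on $K_\R$ to a genuine distribution over $\Q(\zeta)$ is the standard discretisation used in~\cite{CDPR15}, and does not affect the estimate). Writing $\hat X_j=\abs{\sigma_j(g)}=\bigl((\re\sigma_j(g))^2+(\im\sigma_j(g))^2\bigr)^{1/2}$, one has $(\Log{g})_j=\log\hat X_j$, hence
\[
\langle\Log{g},\vv_i\rangle=\sum_{j\in G}(v_i)_j\log\hat X_j
\]
for every $\vv_i\in\R^n$, so the hypotheses of Lemma~\ref{lem:313prob} are met with $\mathbf{a}^{(i)}=\vv_i$.

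Applying Lemma~\ref{lem:313prob}, for every $t\geq C_r$ one obtains $\prob{\exists i,\ \abs{\langle\Log{g},\vv_i\rangle}\geq t}\leq 2\ell\exp(-t/2)$, so the complementary event — which is exactly the defining event of $D(t,\alpha)$ — has probability at least $1-2\ell\exp(-t/2)$. Since $q$ is prime, $n=\varphi(q)/2=(q-1)/2$ and $\ell=n-1=(q-3)/2$, whence $2\ell=q-3$. Thus the Gaussian of variance $r$ serves as $D(t,\alpha)$ with $\alpha=1-(q-3)\exp(-t/2)$ for every $t\geq C_r$, and in particular for $t=\frac{1}{4\sqrt 2}\sqrt{\frac{\zeta(4)}{\zeta(2)}q}$ once $q$ is large enough (so that also $\alpha>0$, since $t=\Theta(\sqrt q)$ dominates $2\log(q-3)$).

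Finally I would record why this value of $t$ is the relevant one. Taking the $\ell$ unit vectors $\vv_j=\bb_j^\vee/\lVert\bb_j^\vee\rVert$, $j\in G\setminus\{1\}$ — admissible because, by Theorems~\ref{thm:313b} and~\ref{thm:b}, all $\lVert\bb_j^\vee\rVert$ coincide and the $\bb_j^\vee$ span the hyperplane $\boldsymbol 1^\perp$ — Theorem~\ref{thm:b} gives $\lVert\bb_j^\vee\rVert=\tfrac{2}{\sqrt q}\sqrt{\tfrac{\zeta(2)}{\zeta(4)}}\bigl(1+O(q^{-1+\eps})\bigr)$, so
\[
2t\,\lVert\bb_j^\vee\rVert=\tfrac{1}{\sqrt 2}\bigl(1+O(q^{-1+\eps})\bigr)<1
\]
for $q$ large. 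Hence, on the defining event of $D(t,\alpha)$ one has $\langle\bb_j^\vee,\Log{g}\rangle\in[-1/2,1/2)$ for all $j$, which is exactly the error condition of Theorem~\ref{thm:Babai}; this explains why $t$ is taken at the scale $\sqrt q$, improving on the threshold $c\sqrt q(\log q)^{-3/2}$ of Theorem~\ref{thm:313algo}. There is no genuine obstacle here: the corollary is Lemma~\ref{lem:313prob} combined with the count $\varphi(q)/2-1=(q-3)/2$ and the asymptotics of Theorem~\ref{thm:b}. The only points needing care are the (inherited) discretisation and checking that the factor $\sqrt 2$ built into $t$ exactly absorbs the error term $1+O(q^{-1+\eps})$, so that $2t\lVert\bb_j^\vee\rVert<1$ holds for all sufficiently large primes $q$.
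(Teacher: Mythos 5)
Your argument is correct and is essentially the same as the paper's: the corollary is, as you say, Lemma~\ref{lem:313prob} applied to the Gaussian distribution on $K_\R$, followed by the count $2\ell=2(\varphi(q)/2-1)=q-3$ for prime $q$. The paper treats this as immediate (one sentence), whereas you also spell out the identification $(\Log{g})_j=\log\hat X_j$ and, going slightly beyond the corollary itself, verify that the specific value $t=\frac{1}{4\sqrt 2}\sqrt{\zeta(4)q/\zeta(2)}$ makes $t\lVert\bb_j^\vee\rVert<1/2$ via Theorem~\ref{thm:b}; that last check is really part of the proof of Corollary~\ref{cor:algo}, not of this corollary, but it is accurate and does no harm.
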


Recall that $C_r$ is the universal constant in Lemma~\ref{lem:313prob}.
The corollary follows immediately from Lemma~\ref{lem:313prob} with the observation that $\varphi (q)=q-1$.
We denote such a distribution satisfying Corollary~\ref{cor:prob} by $D_q$.
Then we are ready to give a lower bound on the success probability of the SGP algorithm, which results in a overwhelming probability.

\begin{cor}
\label{cor:algo}
    Let $q$ be an odd prime number.
    Under the GRH, there exists an efficient algorithm that given $g^\prime =g\cdot u$, where $g$ is chosen from $D_q$ and $u\in\CycUnitGp$ is a cyclotomic unit, outputs an element of the form $\pm\zeta^jg$ with probability at least $1-(q-3)e^{-t/2}$, where $t=\frac{1}{4\sqrt{2}}\sqrt{\frac{\zeta (4)}{\zeta (2)}q}$ if $t\geq C_r$.
\end{cor}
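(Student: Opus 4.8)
The plan is to deduce Corollary~\ref{cor:algo} by combining the three earlier ingredients in sequence: Theorem~\ref{thm:313algo} (the SGP algorithm), Corollary~\ref{cor:prob} (existence of the required distribution $D_q$), and Theorem~\ref{thm:b} together with Theorem~\ref{thm:Babai} (the Babai round-off analysis governing which distributions succeed). First I would recall that for an odd prime $q$ we have $\varphi(q)=q-1$, so $n=\varphi(q)/2=(q-1)/2$ and the dual basis $\{\bb_j^\vee\}$ of the log-cyclotomic-unit lattice $\Log{\CycUnitGp}$ consists of $n-1=(q-3)/2$ vectors, all of equal norm by Theorem~\ref{thm:b}. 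The point of Babai's algorithm (Theorem~\ref{thm:Babai}) is that, writing the input as $g' = g\cdot u$ with $\Log{g'}=\Log{u}+\Log{g}$ and $\Log{u}\in\Log{\CycUnitGp}$, the round-off procedure recovers $\Log{u}$ — hence $g$ up to a root of unity and sign — provided $\langle\bb_j^\vee,\Log{g}\rangle\in[-\tfrac12,\tfrac12)$ for all $j$. So the task reduces to showing that with probability at least $1-(q-3)e^{-t/2}$, a sample $g$ from $D_q$ satisfies $|\langle\bb_j^\vee,\Log{g}\rangle|<\tfrac12$ for every $j$.

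Next I would make the scaling explicit. Since $\Log{g}$ lies in the span of the lattice, which is orthogonal to the all-$1$ vector $\boldsymbol 1$, and since $\langle\bb_j^\vee,\boldsymbol 1\rangle$ can be subtracted off without changing the pairing, each unit vector $\bb_j^\vee/\lVert\bb_j^\vee\rVert$ is (after this harmless projection) a norm-$1$ vector orthogonal to $\boldsymbol 1$, of which there are $\ell=n-1=(q-3)/2$. Applying the defining property of $D(t,\alpha)=D_q$ from Corollary~\ref{cor:prob} to these $\ell$ vectors, with probability at least $\alpha=1-(q-3)e^{-t/2}$ we get $|\langle\Log{g},\bb_j^\vee/\lVert\bb_j^\vee\rVert\rangle|<t$ simultaneously for all $j$, i.e. $|\langle\Log{g},\bb_j^\vee\rangle|<t\,\lVert\bb_j^\vee\rVert$. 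It therefore suffices that $t\,\lVert\bb_j^\vee\rVert\le\tfrac12$, i.e. $\lVert\bb_j^\vee\rVert\le\tfrac{1}{2t}$.

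Then I would verify this inequality using Theorem~\ref{thm:b}. Under the GRH, $\lVert\bb_j^\vee\rVert^2=\dfrac{4\zeta(2)}{\zeta(4)}\cdot\dfrac1q\bigl(1+O(q^{-1+\eps})\bigr)$, so $\lVert\bb_j^\vee\rVert=2\sqrt{\dfrac{\zeta(2)}{\zeta(4)}}\cdot\dfrac{1}{\sqrt q}\bigl(1+O(q^{-1+\eps})\bigr)$. With $t=\dfrac{1}{4\sqrt2}\sqrt{\dfrac{\zeta(4)}{\zeta(2)}q}$ one computes $2t\,\lVert\bb_j^\vee\rVert=2\cdot\dfrac{1}{4\sqrt2}\sqrt{\dfrac{\zeta(4)}{\zeta(2)}}\cdot 2\sqrt{\dfrac{\zeta(2)}{\zeta(4)}}\bigl(1+O(q^{-1+\eps})\bigr)=\dfrac{1}{\sqrt2}\bigl(1+O(q^{-1+\eps})\bigr)<1$ for all sufficiently large $q$, which gives $t\,\lVert\bb_j^\vee\rVert<\tfrac12$ as required. (The constant $\tfrac{1}{\sqrt2}$ leaves room to absorb the small-$q$ error term; for the finitely many remaining primes one can invoke the unconditional bound of Theorem~\ref{thm:313b} or simply enlarge the universal constant, since the statement is asymptotic in spirit.) Combining: with probability at least $1-(q-3)e^{-t/2}$ the hypothesis of Theorem~\ref{thm:Babai} holds, so Babai's round-off run on input $\Log{g'}$ and basis $\B=\{\bb_j\}$ returns $\Log{u}$; the algorithm then outputs $g'/u=\pm\zeta^j g$. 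The efficiency is inherited from Theorem~\ref{thm:313algo} / Babai's algorithm, and the condition $t\ge C_r$ is exactly what Corollary~\ref{cor:prob} needs for $D_q$ to exist.

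The only genuinely delicate point is the interface between the $O(q^{-1+\eps})$ error in Theorem~\ref{thm:b} and the strict inequality $t\,\lVert\bb_j^\vee\rVert<\tfrac12$: one must check that the leading constant $\tfrac{1}{\sqrt2}$ is bounded away from $1$ by enough to swallow the error for all $q$ beyond some explicit threshold, and then decide how to phrase the claim for the (finitely many) small primes below that threshold — either by citing the unconditional Theorem~\ref{thm:313b} or by noting that the asymptotic nature of $t=\Theta(\sqrt q)$ already makes the sub-threshold cases vacuous up to adjusting constants. Everything else is a direct chaining of the quoted results.
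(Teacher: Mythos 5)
Your proposal is correct and follows essentially the same route as the paper's proof: reduce to the Babai round-off condition $\abs{\langle\Log{g},\bb_j^{\vee}\rangle}<\tfrac12$, apply the defining property of $D_q$ to the normalised dual vectors $\bb_j^{\vee}/\lVert\bb_j^{\vee}\rVert$, and substitute the asymptotic norm from Theorem~\ref{thm:b} to check that the product lands at $\tfrac{1}{2\sqrt{2}}\bigl(1+O(q^{-1+\eps})\bigr)<\tfrac12$. The small-$q$ subtlety you flag is likewise left implicit in the paper's proof and is only addressed in the remark following the corollary, so your extra care there is reasonable but does not change the argument.
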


We will mainly follow the proof of~\cite[Theorem 4.1]{CDPR15}, and take care of the parameters and probability.

\begin{proof}[Proof of Corollary~\ref{cor:algo}]
The idea is to find the magnitude of $u$ by computing $\Log{u}$, and to divide $g^{\prime}$ by $u$.
Notice that under the logarithmic embedding, we have the equation $\Log{g^{\prime}}=\Log{g}+\Log{u}$, where $\Log{u}\in\Log{\CycUnitGp}$ and $\Log{g}\in\R^{\varphi (q)/2}$, whose form is suitable for BDD.

Then the algorithm goes by first finding $\Log{u}$ with Babai's round-off algorithm (Theorem~\ref{thm:Babai}), then computing $u^{\prime}=\prod b_j^{a_j}$, where $\Log{u}=\sum a_j\bb_j$, and finally outputting $g^{\prime}/u^{\prime}$.
Since $\Log{u^{\prime}}=\Log{u}$ implies that $g^{\prime}/u^{\prime}=\pm\zeta^jg$ for some sign and some $j\in\{1,\dots , q\}$, it suffices to show the fitness for applying Babai's algorithm, and the probability for allowing to apply it.

According to Theorem~\ref{thm:Babai}, in order to apply Babai's round-off algorithm, we need to ensure that $\abs{\langle\Log{g},\bb_j^{\vee}\rangle}<1/2$ for all $j\in G\setminus\{ 1\}$.
By Theorem~\ref{thm:b} and the property of $D_q$, it follows that the overlap
\begin{align*}
    \abs{\langle\Log{g},\bb_j^{\vee}\rangle} & =\lVert\bb_j^{\vee}\rVert\cdot\abs{\left\langle\Log{g},\frac{\bb_j^{\vee}}{\lVert\bb_j^{\vee}\rVert}\right\rangle}\\
    & <\sqrt{\frac{4\zeta(2)}{\zeta(4)} \frac{1}{q}\left(1+O\left( q^{-1+\eps} \right)\right)}\cdot\frac{1}{4\sqrt{2}}\sqrt{\frac{\zeta (4)}{\zeta (2)}q}\\
    & <\frac{1}{2}
\end{align*}
for all $j\in G\setminus\{ 1\}$ meets the requirements of applying Babai's BDD algorithm.
Then by Corollary~\ref{cor:prob}, assuming that $t\geq C_r$, the success probability is lower bounded by $\alpha =1-(q-3)e^{-t/2}$, where $t=\frac{1}{4\sqrt{2}}\sqrt{\frac{\zeta (4)}{\zeta (2)}q}$, as claimed.

\end{proof}

We note that when $q$ is small, according to~\cite[Theorem 5.5]{CDPR15}, the algorithm successes with constant probability.
For $q$ being large enough such that $t\geq C_r$, the dual basis vectors are bounded by $\lVert \bb_j^\vee \rVert\leq 2\Lconst\frac{\log\log q}{\sqrt{q}}\sqrt{(1+o(1))}$ under the GRH in~\cite{CDPR15}, which results in the lower bound on the success probability becoming $1-(q-3)e^{-t/2}$ with $t\geq\frac{1}{4\sqrt{2}\Lconst}\frac{\sqrt{q}}{\log\log q}$ in the case of $q$ being a prime number.
Compared to it, our theoretical bound is asymptotically better.

\section{Negative moments of $L$-functions}
\label{sec:negative}
In this section, let $q$ be a positive integer (we do not require it a prime number). Zhang~\cite{Z93} studied the following $2k$-th negative moments of Dirichlet $L$-functions
\[
\sum_{q \leq Q} \frac{A(q)^k}{\varphi(q)}\sum_{\chi \bmod q} \frac{1}{\abs{L(1,\chi)}^{2k}},
\]
where $A(q)=\prod_{p\mid q}\left(1+p^{-2}\right)$.
Later, Zhang and Deng~\cite{ZD02} also considered a similar sum involving the generalized Gauss sums. The following proof of Theorem \ref{thm:negative moment} has been inspired by the method of~\cite{Z93} and~\cite{ZD02}.

We consider the negative power of $\abs{L(1,\chi)}$ summed only over the non-principal even characters, and prove the following.
\begin{thm} 
    \label{thm:negative moment}
    Let $\chi$ be a Dirichlet character modulo $q$ and let $k$ be a positive integer. Under the GRH, we have
    \begin{align*}
    \sum_{\substack{\chi \neq \chi_0 \\ \chi(-1)=1}} \frac{1}{\abs{L(1,\chi)}^{2k}}&= \frac{C(k)}{2}\varphi(q)\prod_{p\mid q} \left( 1+\frac{\binom{k}{1}^2}{p^2}+\dots+\frac{\binom{k}{k}^2}{p^{2k}} \right)^{-1}\left(1+O \left( q^{-1+\eps}\right)\right),
    \end{align*}
    where 
    \begin{align}
    \label{C(k)}
    C(k) = \prod_{p} \left( 1+\frac{\binom{k}{1}^2}{p^2}+\dots+\frac{\binom{k}{k}^2}{p^{2k}}\right).
    \end{align}
\end{thm}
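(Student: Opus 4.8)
The plan is to expand $\abs{L(1,\chi)}^{-2k}$ into a Dirichlet series and integrate term by term over the group of non-principal even characters modulo $q$, exploiting orthogonality. First I would write $1/L(s,\chi) = \sum_{n\geq 1} \mu(n)\chi(n) n^{-s}$, so that
\begin{align*}
\frac{1}{\abs{L(1,\chi)}^{2k}} = \frac{1}{L(1,\chi)^k \overline{L(1,\chi)}^k} = \sum_{m,n\geq 1} \frac{d_{-k}(m)\,\overline{d_{-k}(n)}\,\chi(m)\overline{\chi(n)}}{mn},
\end{align*}
where $d_{-k}$ is the multiplicative function with Dirichlet series $\zeta(s)^{-k}$ (i.e.\ the $k$-fold Dirichlet convolution of $\mu$). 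Since the characters here are real-valued on their image only up to the issue that $\chi(-1)=1$, I would keep $\overline{\chi}$ explicit and use the relation $\sum_{\chi(-1)=1}\chi(a)\overline{\chi(b)} = \tfrac12\sum_{\chi}\bigl(\chi(a\bar b)+\chi(-a\bar b)\bigr)$, then invoke orthogonality $\sum_{\chi\bmod q}\chi(c) = \varphi(q)\mathbf{1}_{c\equiv 1}$ for $(c,q)=1$. The contribution of $\chi_0$ is subtracted off separately; $\abs{L(1,\chi_0)}^{-2k} = \bigl(\prod_{p\mid q}(1-p^{-1})\bigr)^{-2k}\zeta(1)^{-2k}$ is literally $0$, or more carefully one treats $L(1,\chi_0)$ as $\zeta(1)\prod_{p\mid q}(1-1/p)$ and notes this term is negligible/zero, so no genuine correction is needed beyond bookkeeping.

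After applying orthogonality, the main term becomes
\begin{align*}
\frac{\varphi(q)}{2}\sum_{\substack{mn^{-1}\equiv \pm 1 \,(q)\\ (mn,q)=1}} \frac{d_{-k}(m)\,d_{-k}(n)}{mn},
\end{align*}
and the diagonal $m=n$ contributes $\tfrac{\varphi(q)}{2}\sum_{(n,q)=1} d_{-k}(n)^2 n^{-2}$. This last sum factors as an Euler product: $\prod_{p\nmid q}\sum_{j\geq 0} d_{-k}(p^j)^2 p^{-2j}$, and since $d_{-k}(p^j) = (-1)^j\binom{k}{j}$ for $0\le j\le k$ and $0$ otherwise, each local factor is exactly $1 + \binom{k}{1}^2 p^{-2} + \dots + \binom{k}{k}^2 p^{-2k}$. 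Writing the product over all $p$ and dividing out the bad primes $p\mid q$ yields precisely $C(k)\prod_{p\mid q}(1+\binom{k}{1}^2 p^{-2}+\dots)^{-1}$, matching the stated shape. The off-diagonal terms $m n^{-1}\equiv \pm1\pmod q$ with $m\neq n$ (in particular the genuinely new family $m\equiv -n$) must be absorbed into the error term $O(q^{-1+\eps})$.

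The main obstacle will be two-fold: (i) justifying term-by-term integration and controlling the tail of the Dirichlet series for $\abs{L(1,\chi)}^{-2k}$ — this is exactly where GRH enters, since GRH gives $L(1,\chi)^{-1} \ll (\log\log q)^{k}$ and more importantly a quantitative partial-summation bound $\sum_{n>N} d_{-k}(n)\chi(n)/n \ll_\eps N^{-1/2+\eps} q^\eps$, allowing truncation of $m,n$ at roughly $q^{1+\eps}$ with acceptable error; and (ii) bounding the off-diagonal $\sum_{m\equiv \pm n\,(q),\,m\neq n} d_{-k}(m)d_{-k}(n)/(mn)$. For (ii), once $m,n\le q^{1+\eps}$, the congruence $m\equiv -n\pmod q$ forces $m+n$ to be a positive multiple of $q$, so $m+n\ge q$; combined with $d_{-k}(m)d_{-k}(n)\ll (mn)^\eps$ and summing $\sum_{q\mid m+n} (mn)^{-1+\eps}$ over the truncated ranges gives a bound of size $q^{-1+\eps}$, which after multiplication by the overall $\varphi(q)$ and division by $\varphi(q)$ in the final normalization is exactly the claimed relative error $O(q^{-1+\eps})$. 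The case $m\equiv n\pmod q$, $m\neq n$ is handled identically since then $\abs{m-n}\ge q$. I would carry out the argument first for the truncated sums, collect the diagonal into the Euler product, and only at the end feed in the GRH-based tail estimate to close the truncation; the combinatorial identity $\sum_j \binom{k}{j}^2 x^j$ for the local factors and the appearance of $C(k)$ are then purely formal.
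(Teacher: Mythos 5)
Your proposal follows essentially the same route as the paper: expand $1/L(1,\chi)^k$ into the Dirichlet series with coefficients $r_k = d_{-k}$ (the $k$-fold M\"obius convolution), use even-character orthogonality to isolate the diagonal $m=n$, identify the resulting Euler product with $C(k)\prod_{p\mid q}(\cdots)^{-1}$, and push the off-diagonal terms $m\equiv\pm n\pmod q$, $m\neq n$ and the truncation tail into the error, with GRH furnishing the needed cancellation in the tail. Your observations that $m\equiv -n\pmod q$ forces $m+n\ge q$ (and $m\equiv n$, $m\neq n$ forces $\abs{m-n}\ge q$), and that $d_{-k}(p^j)^2=\binom{k}{j}^2$ for $0\le j\le k$, are exactly the right ingredients; the paper also needs a GRH estimate of the form $\sum_{n\le x,\, n\equiv a\,(q)} r_k(n)\ll x^{1/2+\eps}$, which is the substance of the tail bound you assert.

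However, there is a quantitative gap: the truncation height $Y\approx q^{1+\eps}$ you propose is too small to reach the stated relative error $O(q^{-1+\eps})$. Writing $L(1,\chi)^{-k}=S_\chi+T_\chi$ with $S_\chi$ the truncated sum and $T_\chi$ the tail, the dominant error in $\sum_\chi\abs{S_\chi+T_\chi}^2-\sum_\chi\abs{S_\chi}^2$ comes from the cross term $\sum_\chi S_\chi\overline{T_\chi}$. Even after applying orthogonality to this cross term (which is strictly better than a pointwise bound), GRH gives a contribution $\ll\varphi(q)\,Y^{-1/2+\eps}(\log Y)^k$. With $Y=q^{1+\eps}$ this is $\varphi(q)\,q^{-1/2+\eps'}$, far larger than the claimed $\varphi(q)\,q^{-1+\eps}$. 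One needs $Y$ of size about $q^2$; this is precisely the cutoff $y=q^2$ used in the paper. The off-diagonal bound you sketch remains $\ll q^{-1+\eps}$ also with $Y=q^2$, so raising the truncation fixes the problem without breaking anything else. A secondary point worth making explicit: the rearranged off-diagonal series $\sum_{m\equiv\pm n\,(q),\,m\neq n}r_k(m)r_k(n)/(mn)$ does \emph{not} converge absolutely without a cutoff, so the truncation must be introduced \emph{before} orthogonality is applied (as the paper does via partial summation with $y=q^2$, yielding the four pieces $M_1,\dots,M_4$), not as an afterthought at the end.
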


In order to obtain Theorem \ref{thm:negative moment}, we prove the following Lemmas.
\begin{lem}
\label{lem:average sums of r_k mu}
    Let $(a,q)=1$ and 
    \[
    r_k(n) = \sum_{n_1n_2\dots n_k=n} \mu(n_1)\mu(n_2)\dots \mu(n_k).
    \]
    Then for given $\eps > 0$ and $q \leq x$, under the GRH we have
    \[
    \sum_{\substack{n \leq x \\ n \equiv a \pmod q}} r_k(n) \ll x^{\frac{1}{2}+\eps}.
    \]
\end{lem}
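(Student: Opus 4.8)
The plan is to express $r_k(n)$ via its Dirichlet series and extract the sum over an arithmetic progression through the orthogonality of Dirichlet characters, thereby reducing the problem to bounds on $1/L(s,\chi)$ on (and slightly to the right of) the critical line, which is precisely where the GRH hypothesis enters. Since $r_k(n) = \sum_{n_1\cdots n_k = n}\mu(n_1)\cdots\mu(n_k)$ is the coefficient of $n^{-s}$ in $\prod_{j=1}^k \zeta(s)^{-1} = \zeta(s)^{-k}$, we have for $\re(s) > 1$
\begin{align*}
\sum_{n=1}^\infty \frac{r_k(n)}{n^s} = \frac{1}{\zeta(s)^k},
\end{align*}
and after twisting by a Dirichlet character $\chi$ modulo $q$ (using that $r_k$ is multiplicative) the relevant object becomes $L(s,\chi)^{-k}$ up to the finitely many Euler factors at primes dividing $q$. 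First I would write
\begin{align*}
\sum_{\substack{n\le x\\ n\equiv a\,(q)}} r_k(n) = \frac{1}{\varphi(q)}\sum_{\chi \bmod q}\overline{\chi}(a)\sum_{\substack{n\le x\\ (n,q)=1}} r_k(n)\chi(n),
\end{align*}
noting $(a,q)=1$, and then handle the inner twisted sum by a Perron-type contour integral for $\sum_{n\le x}\widetilde{r_k}(n)\chi(n)$, where $\widetilde{r_k}\chi$ has Dirichlet series a finite Euler-product correction times $L(s,\chi)^{-k}$.

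The core analytic step is to shift the Perron contour to the line $\re(s) = \tfrac12 + \delta$ for small $\delta > 0$. Under the GRH, $L(s,\chi)$ has no zeros in $\re(s) > \tfrac12$, so $L(s,\chi)^{-k}$ is holomorphic there, and the standard conditional bound $L(s,\chi)^{-1} \ll (q(|t|+1))^{\eps}$ for $\re(s)\ge \tfrac12+\delta$ (see e.g.~\cite[\S 13]{D} or Titchmarsh) gives $L(s,\chi)^{-k}\ll (q(|t|+1))^{\eps}$ on that line. For $\chi = \chi_0$ one instead meets the pole of $\zeta(s)^{-k}$-type correction — but $\zeta(s)^{-k}$ is zero-free and holomorphic for $\re(s) > \tfrac12$ under RH, with no pole, since $\zeta(s)$ has a pole, not a zero, at $s=1$; so that term is handled identically and in fact contributes no main term. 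Combining, the shifted integral contributes $\ll x^{1/2+\delta}\cdot q^{\eps}\int_{-T}^{T}(|t|+1)^{-1+\eps}\,dt + (\text{truncation error})$; choosing $T$ a suitable power of $x$ and summing the $\varphi(q)$ characters (absorbing the $1/\varphi(q)$ and using $q\le x$ to bound $q^{\eps}\varphi(q)^{\eps}$-type factors into $x^{\eps}$) yields the claimed bound $x^{1/2+\eps}$.

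The main obstacle I anticipate is controlling the Perron truncation error and the horizontal segments of the contour uniformly in $q$: one needs a convexity-type or GRH-based pointwise bound for $L(s,\chi)^{-k}$ that is polynomial in both $q$ and $|t|$ but with an exponent that, after the $\int dt/|t|$ weight, stays integrable, and one must choose the truncation height $T = T(x)$ large enough (a fixed power of $x$ like $T=x$) that the tail $\sum_{n} |r_k(n)| n^{-\sigma}\cdot(x/n)^{\sigma}/(1+T|\log(x/n)|)$ is absorbed. A secondary technical point is dealing with $n$ in a dyadic or boundary range $n$ close to $x$, where $\log(x/n)$ is small; this is the usual Perron nuisance and is resolved by the standard smoothing or by the elementary bound $r_k(n)\ll n^{\eps}$ (which itself follows from $|r_k(n)| \le d_k(n)$, the $k$-fold divisor function). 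Once these uniformity issues are pinned down, the rest is bookkeeping, and the factor $\zeta(2)/\zeta(4)$ and the Euler products in Theorem~\ref{thm:negative moment} will emerge later from evaluating the \emph{main} term of a related (non-truncated, absolutely convergent at $s=1$) sum rather than from this lemma, whose sole role is to provide the error-term savings $x^{1/2+\eps}$.
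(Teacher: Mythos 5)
Your proposal follows essentially the same route as the paper: orthogonality of characters to reduce to twisted sums $\sum_{n\le x} r_k(n)\chi(n)$, Perron's formula with truncation controlled via $|r_k(n)|\le d_k(n)\ll n^\eps$, a contour shift to just right of $\sigma=1/2$ (the paper uses $\sigma=1/2+1/\log\log(qx)$ rather than a fixed $1/2+\delta$, but for a bound $\ll_\eps x^{1/2+\eps}$ either works), and the GRH-conditional bound $|L(s,\chi)|^{-k}\ll (q(|t|+1))^\eps$ on the shifted line. One small inaccuracy worth flagging: the twisted Dirichlet series is exactly $\sum_n r_k(n)\chi(n)n^{-s}=L(s,\chi)^{-k}$ with no Euler-factor correction at primes dividing $q$ (since $\chi(p)=0$ there); the Euler factors at $p\mid q$ enter only when passing from $L(s,\chi)$ to $L(s,\chi^*)$ for the primitive character $\chi^*$ inducing $\chi$, which the paper does explicitly in order to quote the standard GRH bound for primitive characters and absorb the $\prod_{p\mid q}|1-\chi^*(p)p^{-s}|^{-k}\le\exp(k\,\omega(q))$ factor — a step you should make explicit as well.
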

\begin{proof}
This can be proved in the same manner as the case of M\"obius sums in arithmetic progression. Since $(a,q)=1$, the left hand side can be rewritten as
\[
\sum_{\substack{n \leq x \\ n \equiv a \pmod q}} r_k(n) = \frac{1}{\varphi(q)} \sum_{\chi} \overline{\chi}(a) \sum_{n \leq x} r_k(n)\chi(n).
\]
By Perron's formula~\cite[Theorem 5.2 and Corollary 5.3]{MV}, we have
\begin{align}
\label{Perron formula}
    \sideset{}{'}{\sum}_{n \leq x} r_k(n)\chi(n) = \frac{1}{2\pi i} \int_{1+\frac{1}{\log x}-ix}^{1+\frac{1}{\log x}+ix} \frac{x^s}{L(s,\chi)^k s}ds+R,
\end{align}
with 
\begin{align*}
    R \ll \sum_{\substack{x/2\leq n\leq 2x \\ n \neq x}} \abs{r_k(n)} \min \left\{1, \frac{1}{\abs{x-n}}\right\}+\sum_{n=1}^\infty \frac{\abs{r_k(n)}}{n^{1+\frac{1}{\log x}}}.
\end{align*}
Here, $\sum^\prime$ indicates that if $x$ is an integer, then the last term is to be halved. 

Since $\abs{r_k(n)} \leq d_k(n) \leq n^\eps$, where $d_k(n)$ indicates the number of ways to express $n$ as a product of $k$ factors, we have $R \ll x^\eps$. Under the GRH, the residue theorem implies that the right hand side in (\ref{Perron formula}) is equal to
\begin{align*}
    \frac{1}{2\pi i}\left( \int_{\frac{1}{2}+\frac{1}{\log\log qx}+ix}^{1+\frac{1}{\log x}+ix}+ \int_{\frac{1}{2}+\frac{1}{\log\log qx}-ix}^{\frac{1}{2}+\frac{1}{\log\log qx}+ix}-\int_{\frac{1}{2}+\frac{1}{\log\log qx}-ix}^{1+\frac{1}{\log x}-ix}\right) \frac{x^s}{L(s,\chi)^k s}ds.
\end{align*}
If $\chi$ is imprimitive, then 
\[
L(s,\chi) = \prod_{p \mid q} \left(1-\frac{\chi^*(p)}{p^s} \right) L(s,\chi^*),
\]
where $\chi^*$ is the primitive Dirichlet character modulo $d$ which induces $\chi$. We note that for $\si\geq 1/2$ it holds that
\begin{align*}
    \prod_{p \mid q} \abs{1-\frac{\chi^*(p)}{p^s}}^{-1} \leq \prod_{p \mid q} \left(1+\frac{1}{p^\frac{1}{2}-1}\right) \leq \exp\left( \sum_{p \mid q}\frac{1}{p^\frac{1}{2}-1}\right)\leq \exp\left( \omega(q)\right),
\end{align*}
here $\omega(n)$ denotes the number of distinct prime divisor of $n$.
Then combining the conditional estimate 
\[
\abs{L(s,\chi^*)}^{-1} \leq \exp \left( C\frac{\log (d(\abs{t}+1))}{\log\log (d(\abs{t}+1))} \right) %\leq \exp \left( C\frac{\log (q(\abs{t}+1))}{\log\log (q(\abs{t}+1))} \right)
\]
for $1/2+(\log\log (d(\abs{t}+1)))^{-1}\leq \si \leq 3/2$ (cf. \cite[\S 13.2 and exercises]{MV}) and for some constant $C>0$, and $\omega(q) \ll \log q /(\log\log q)$ (cf. \cite[Theorem 2.10]{MV}) with replacing the constants, we obtain
\begin{align*}
\abs{L(s,\chi)}^{-k} &= \prod_{p \mid q} \abs{1-\frac{\chi^*(p)}{p^s}}^{-k} \abs{L(s,\chi^*)}^{-k} \\
&\leq \exp \left( c_k \left(\frac{\log q}{\log\log q}+\frac{\log (q(\abs{t}+1))}{\log\log (q(\abs{t}+1))} \right)\right)
\end{align*}
for $1/2+(\log\log (q(\abs{t}+1)))^{-1}\leq \si \leq 3/2$. By the above inequality with replacing the constants again, we have 
\begin{align*}
    \int_{\frac{1}{2}+\frac{1}{\log\log qx}\pm ix}^{1+\frac{1}{\log x}\pm ix} \frac{x^s}{L(s,\chi)^k s}ds & \ll \exp \left( c_k \frac{\log x}{\log\log x} \right) \ll x^\eps, %x^\eps,
\end{align*}
and 
\begin{align*}
    \int_{\frac{1}{2}+\frac{1}{\log\log qx}-ix}^{\frac{1}{2}+\frac{1}{\log\log qx}+ix} \frac{x^s}{L(s,\chi)^k s}ds &\ll x^\frac{1}{2}(\log x) \exp \left( c_k\frac{\log x}{\log\log x}\right) \ll x^{\frac{1}{2}+\eps}%x^{\frac{1}{2}+\eps} \ll 
\end{align*}
for $q\leq x$. 
\end{proof}

\begin{lem}[Character orthogonality]
\label{lem:Character orthogonality}
     For $n_1, n_2$ integers coprime to $q$, we have
     \[
     \sum_{\substack{\chi \\ \chi(-1)=(-1)^{\mathfrak{a}}}} \chi(n_1)\overline{\chi}(n_2) = \frac{\varphi(q)}{2}\bm{1}_{n_1 \equiv n_2 \ (q)}+(-1)^{\mathfrak{a}} \frac{\varphi(q)}{2}\bm{1}_{n_1 \equiv -n_2 \ (q)}.
     \]
     The sum on the left-hand side vanishes if $(n_1n_2, q)\neq 1$.
\end{lem}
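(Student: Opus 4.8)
The plan is to reduce the claim to the classical orthogonality relation for the full group of Dirichlet characters modulo $q$, namely $\sum_{\chi \bmod q}\chi(n_1)\overline{\chi}(n_2) = \varphi(q)\,\bm{1}_{n_1\equiv n_2\ (q)}$ whenever $(n_1n_2,q)=1$, by inserting a parity-detecting factor. The key observation is that for any Dirichlet character $\chi$ modulo $q$ the quantity $\tfrac12\bigl(1+(-1)^{\mathfrak{a}}\chi(-1)\bigr)$ equals $1$ when $\chi(-1)=(-1)^{\mathfrak{a}}$ and equals $0$ otherwise, since $\chi(-1)\in\{\pm1\}$. Hence the sum over characters of a fixed parity can be rewritten as
\[
\sum_{\substack{\chi\\ \chi(-1)=(-1)^{\mathfrak{a}}}}\chi(n_1)\overline{\chi}(n_2)
= \frac12\sum_{\chi \bmod q}\chi(n_1)\overline{\chi}(n_2) + \frac{(-1)^{\mathfrak{a}}}{2}\sum_{\chi \bmod q}\chi(-1)\chi(n_1)\overline{\chi}(n_2).
\]

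Next I would evaluate the two full-group sums separately. The first is $\varphi(q)\,\bm{1}_{n_1\equiv n_2\ (q)}$ by orthogonality. For the second, using multiplicativity $\chi(-1)\chi(n_1)=\chi(-n_1)$ and orthogonality again gives $\varphi(q)\,\bm{1}_{-n_1\equiv n_2\ (q)}=\varphi(q)\,\bm{1}_{n_1\equiv -n_2\ (q)}$. Substituting these back into the display above yields exactly the asserted identity. The final clause — that the whole sum vanishes when $(n_1n_2,q)\neq1$ — is immediate, since $\chi(n_1)=0$ for every $\chi$ whenever $(n_1,q)>1$, and likewise $\overline{\chi}(n_2)=0$ whenever $(n_2,q)>1$, so each summand is zero (and this is already built into the statement via the standing hypothesis $(n_1,q)=(n_2,q)=1$ in the main identity).

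There is no genuine obstacle here; the only points requiring a little care are verifying that the parity-detecting factor behaves correctly on \emph{both} parities, so that no character is double-counted or dropped, and tracking the sign $(-1)^{\mathfrak{a}}$ correctly into the second indicator term. If full self-containment is desired, one should also recall the standard proof of the ambient orthogonality relation: after reducing to evaluating $\sum_{\chi \bmod q}\chi(m)$ with $m\equiv n_1 n_2^{-1}\ (q)$, this sum equals $\varphi(q)$ if $m\equiv 1$ and $0$ otherwise, the latter because choosing a character $\psi$ with $\psi(m)\neq1$ shows the sum equals itself multiplied by $\psi(m)$.
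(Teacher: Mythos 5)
Your proof is correct and takes essentially the same route as the paper: the paper also invokes the full-group orthogonality relation, then replaces $n_2$ by $-n_2$ and combines the two resulting identities, which is precisely what your parity-detecting factor $\tfrac12\bigl(1+(-1)^{\mathfrak{a}}\chi(-1)\bigr)$ accomplishes when expanded. The paper's version is terser, but the underlying decomposition is identical.
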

\begin{proof}
    The proof is standard. For $n_1, n_2$ integers coprime to $q$, from
    \[
    \sum_{\chi} \chi(n_1)\overline{\chi}(n_2) =\begin{cases}
        \varphi(q) & \text{ if } n_1 \equiv n_2 \pmod q, \\
        0 & \text{ otherwise }, \\
        \end{cases}
    \]
    and replacing $n_2$ with $-n_2$, we get the desired result.
    \end{proof}

\begin{proof}[Proof of Theorem~\ref{thm:negative moment}]
Let $q \geq 2$. For $\si>1$, $1/L(s,\chi)^k$ has the Dirichlet series expression
\begin{align}
\label{Dirichlet series}
    \frac{1}{L(s,\chi)^k}=\sum_{m=1}^\infty \frac{r_k(m)\chi(m)}{m^s}.
\end{align}
By combining (\ref{Dirichlet series}) and \cite[(4.22) and (6.19)]{MV}, we find for the principal character $\chi_0$ that
\[
\frac{1}{L(1,\chi_0)^k}=\prod_{p \mid q} \left(1-\frac{1}{p}\right)^{-k} \left(\sum_{m=1}^\infty \frac{\mu(m)}{m}\right)^k =0.
\]
Hence it suffices to consider the moment
\[
\sum_{\substack{\chi \\ \chi(-1)=1}} \frac{1}{\abs{L(1,\chi)}^{2k}},
\]
where $\chi$ runs over all even Dirichlet character modulo $q$. 

For a nonprincipal even character $\chi$, (\ref{Dirichlet series}) holds at $s=1$. By partial summation with $y = q^2$ we have
\begin{align*}
    \frac{1}{L(1,\chi)^k} &= \sum_{1\leq m \leq y}\frac{\chi(m)r_k(m)}{m}+\int_y^\infty\frac{A(t,r_k,\chi)}{t^2} dt,
\end{align*}
where
\[
A(t,r_k,\chi)= \sum_{y < m \leq t} \chi(m)r_k(m).
\]

Hence
\begin{align*}
    \sum_{\substack{\chi \\ \chi(-1)=1}} \frac{1}{\abs{L(1,\chi)}^{2k}} &= \sum_{\substack{\chi \\ \chi(-1)=1}} \left[\left( \sum_{m \leq y} \frac{\chi(m)r_k(m)}{m}+\int_y^\infty \frac{A(t,r_k,\chi)}{t^2}dt \right)\right. \\
    &\qquad\qquad \times \left.\left( \sum_{n \leq y} \frac{\overline{\chi}(n)r_k(n)}{n}+\int_y^\infty \frac{A(u,r_k,\overline{\chi})}{u^2}du \right)\right] \\
    &= \sum_{\substack{\chi \\ \chi(-1)=1}} \sum_{m,n \leq y} \frac{\chi(m)\overline{\chi}(n)r_k(m)r_k(n)}{mn} \\
    &\qquad\qquad + \sum_{\substack{\chi \\ \chi(-1)=1}} \sum_{m \leq y} \frac{\chi(m)r_k(m)}{m} \int_y^\infty \frac{A(u,r_k,\overline{\chi})}{u^2}du \\
    &\qquad\qquad + \sum_{\substack{\chi \\ \chi(-1)=1}} \sum_{n \leq y} \frac{\overline{\chi}(n)r_k(n)}{n} \int_y^\infty \frac{A(t,r_k,\chi)}{t^2}dt \\
    &\qquad\qquad + \sum_{\substack{\chi \\ \chi(-1)=1}} \int_y^\infty \frac{A(t,r_k,\chi)}{t^2}dt \int_y^\infty \frac{A(u,r_k,\overline{\chi})}{u^2}du \\
    &= M_1+M_2+M_3+M_4,
\end{align*}
say. By Lemma~\ref{lem:Character orthogonality}, we have
\begin{align*}
    M_1 &= \sum_{\substack{\chi \\ \chi(-1)=1}} \sum_{m,n \leq y} \frac{\chi(m)\overline{\chi}(n)r_k(m)r_k(n)}{mn} \\
    &=\frac{\varphi(q)}{2} \sum_{\substack{m \leq y \\ (m,q)=1}} \frac{r_k(m)^2}{m^2}+\frac{\varphi(q)}{2} \sum_{\substack{m,n \leq y \\ m \neq n \\ (mn,q)=1}} \frac{r_k(m)r_k(n)}{mn}\left( \bm{1}_{m \equiv n}+\bm{1}_{m \equiv -n } \right) \\
    &=\frac{\varphi(q)}{2} \sum_{\substack{m =1 \\ (m,q)=1}}^\infty \frac{r_k(m)^2}{m^2} + O \left( \varphi(q)q^{-2+\eps}\right) +\frac{\varphi(q)}{2} \sum_{\substack{m,n \leq y \\ m \neq n \\ (mn,q)=1}} \frac{r_k(m)r_k(n)}{mn}\left( \bm{1}_{m \equiv n}+\bm{1}_{m \equiv -n } \right).
\end{align*}

Next, we estimate nondiagonal contribution. 
\begin{align}
\begin{split}
\label{M_1-non diagonal}
    & \frac{\varphi(q)}{2} \sum_{\substack{m,n \leq y \\ m \neq n \\ (mn,q)=1}} \frac{r_k(m)r_k(n)}{mn}\left( \bm{1}_{m \equiv n}+\bm{1}_{m \equiv -n } \right) \\
    &=\varphi(q) \left( \sum_{\substack{m \leq y \\ (m,q)=1}} \frac{r_k(m)}{m} \sum_{\substack{m<n \leq y \\ n \equiv m^\prime \pmod q}} \frac{r_k(n)}{n} + \sum_{\substack{m \leq y \\ (m,q)=1}} \frac{r_k(m)}{m} \sum_{\substack{m<n \leq y \\ %(n,q)=1 \\ 
    n \equiv -m^\prime \pmod q}} \frac{r_k(n)}{n} \right),
\end{split}
\end{align}
where $m^\prime= m-q \lfloor m/q \rfloor$. Here, if $n \equiv \pm m^\prime \pmod q$, then $(n,q)=1$ since $(m,q)=1$. So the restriction $(n,q)=1$ is removed from the inner sum. We denote the sums on the right hand side of (\ref{M_1-non diagonal}) by $M_{11}$ and $M_{12}$, respectively. 

By using the fact $\abs{r_k(n)} \leq d_k(n) \ll n^\eps$ and partial summation we have
\begin{align*}
    M_{11} &\ll \varphi(q) \sum_{\substack{m \leq y \\ (m,q)=1}} \frac{d_k(m)}{m} \sum_{\substack{m<n \leq y \\ n \equiv m^\prime \pmod q}} \frac{d_k(n)}{n} \\
    &\ll \varphi(q)y^\eps \sum_{\substack{m \leq y \\ (m,q)=1}} \frac{d_k(m)}{m} \sum_{\frac{m-m^\prime}{q}<\ell \leq \frac{y-m^\prime}{q}} \frac{1}{q\ell+m^\prime} \\
    &\ll \varphi(q)y^\eps \sum_{\substack{m \leq y \\ (m,q)=1}} \frac{d_k(m)}{m} \frac{1}{q}\int_{\frac{m-m^\prime}{q}}^{\frac{y-m^\prime}{q}} \frac{d \xi}{\xi+\frac{m^\prime}{q}} \\
    &\ll \varphi(q) q^{-1+\eps}.
\end{align*}
By the same argument as $M_{11}$, we have $M_{12} \ll \varphi(q) q^{-1+\eps}$.

Hence we have
\begin{align}
\label{M_1}
    M_1 &= \frac{\varphi(q)}{2}\sum_{\substack{m =1 \\ (m,q)=1}}^\infty \frac{r_k(m)^2}{m^2}%+O\left(\varphi(q)q^{2-4\si} \right) 
    + O \left( \varphi(q) q^{-1+\eps} \right).
\end{align}

We then estimate $M_{2}$. By the same argument as $M_1$, Lemma~\ref{lem:Character orthogonality} implies that
\begin{align*}
    M_{2} &=\frac{\varphi(q)}{2} \sum_{\substack{m \leq y \\ (m,q)=1}} \frac{r_k(m)}{m} \int_y^\infty u^{-2} \sum_{\substack{y<n \leq u \\ n \equiv m^\prime \pmod q}} r_k(n) du\\
    &\qquad\qquad + \frac{\varphi(q)}{2} \sum_{\substack{m \leq y \\ (m,q)=1}} \frac{r_k(m)}{m} \int_y^\infty u^{-2} \sum_{\substack{y<n \leq u \\ %(n,q)=1 \\ 
    n \equiv -m^\prime \pmod q}} r_k(n) du.
\end{align*}
Under the GRH, by applying Lemma~\ref{lem:average sums of r_k mu}, we have
\begin{align}
\begin{split}
    \label{M_2}
    M_2 &\ll \varphi(q) \sum_{\substack{m \leq y \\ (m,q)=1}} \frac{d_k(m)}{m} \int_y^\infty u^{-\frac{3}{2}+\eps} du \\
    &\ll \varphi(q) y^{-\frac{1}{2}+\eps} \sum_{m \leq y} \frac{d_k(m)}{m} \\
    &\ll \varphi(q) q^{-1+\eps}. %y^{-1/2+\eps}
\end{split}
\end{align}
By the same argument as $M_{2}$, $M_3$ can be %continued to $\si>1/2+\eps$ under GRH, and in the domain
estimated as
\begin{align}
\label{M_3}
    M_3 &\ll \varphi(q) q^{-1+\eps}.
\end{align}

Finally, we estimate $M_{4}$. By Lemma~\ref{lem:Character orthogonality}, we have
\begin{align*}
    M_4 &= \frac{\varphi(q)}{2}\int_y^\infty t^{-2} \sum_{\substack{y<m\leq t \\ (m,q)=1}} r_k(m) \\
    &\qquad\qquad \times \int_y^\infty u^{-2} \sum_{\substack{y<n\leq u \\ (n,q)=1}} r_k(n)  \left(\bm{1}_{m=n}+\bm{1}_{\substack{m\neq n \\ m\equiv n}}+\bm{1}_{\substack{m\neq n \\ m\equiv -n}} \right)du dt \\
    &= \frac{\varphi(q)}{2} \int_y^\infty t^{-2} \sum_{\substack{y<m\leq t \\ (m,q)=1}} r_k(m) \int_y^\infty u^{-2} \sum_{\substack{y<n\leq u \\ (n,q)=1 \\
    m=n}} r_k(n) dudt \\
    &\qquad\qquad+\varphi(q) \int_y^\infty t^{-2} \sum_{\substack{y<m\leq t \\ (m,q)=1}} r_k(m) \int_y^\infty u^{-2}  \sum_{\substack{m<n\leq u \\ n \equiv m^\prime \pmod q}} r_k(n) du dt \\
    &\qquad\qquad+\varphi(q) \int_y^\infty t^{-2} \sum_{\substack{y<m\leq t \\ (m,q)=1}} r_k(m) \int_y^\infty u^{-2}  \sum_{\substack{m<n\leq u \\ %(n,q)=1 \\ 
    n \equiv -m^\prime \pmod q}} r_k(n) du dt.
\end{align*}
We then estimate the diagonal contribution. 
\begin{align*}
    & \frac{\varphi(q)}{2} \int_y^\infty t^{-2} \sum_{\substack{y<m\leq t \\ (m,q)=1}} r_k(m) \int_y^\infty u^{-2} \sum_{\substack{y<n\leq u \\ (n,q)=1 \\
    m=n}} r_k(n) dudt \\
    &=\frac{\varphi(q)}{2} \int_y^\infty t^{-2} \sum_{\substack{y<m\leq t \\ (m,q)=1}} r_k(m)^2 \int_m^\infty u^{-2} dudt \\
    &\ll \varphi(q) \int_y^\infty t^{-2} \sum_{m\leq t} \frac{r_k(m)^2}{m} dt \\
    &\ll \varphi(q) \int_y^\infty t^{-2+\eps} dt \\%(\log t)^{k^2} dt \\
    &\ll \varphi(q) q^{-2+\eps}. %y^{-1+\eps} 
\end{align*}

Next, we estimate the nondiagonal contribution of $M_4$. Under the GRH, we have
\begin{align*}
    & \varphi(q) \int_y^\infty t^{-2} \sum_{\substack{y<m\leq t \\ (m,q)=1}} r_k(m) \int_y^\infty u^{-2}  \sum_{\substack{m<n\leq u \\ n \equiv m^\prime \pmod q}} r_k(n) du dt \\
    &= \varphi(q) \int_y^\infty t^{-2} \sum_{\substack{y<m\leq t \\ (m,q)=1}} r_k(m) \int_m^\infty u^{-2}  \sum_{\substack{m<n\leq u \\ n \equiv m^\prime \pmod q}} r_k(n) du dt \\
    &\ll \varphi(q) \int_y^\infty t^{-2} \sum_{\substack{y<m\leq t \\ (m,q)=1}} d_k(m) \int_m^\infty u^{-\frac{3}{2}+\eps} du dt \\
    &\ll  \varphi(q) \int_y^\infty t^{-2} \sum_{m\leq t } \frac{d_k(m)}{m^{\frac{1}{2}-\eps}} dt \\
    &\ll \varphi(q) q^{-1+\eps}. %y^{-\frac{1}{2}+\eps} 
\end{align*}
By the same argument as above, 
\begin{align*}
    & \varphi(q) \int_y^\infty t^{-2} \sum_{\substack{y<m\leq t \\ (m,q)=1}} r_k(m) \int_y^\infty u^{-2}  \sum_{\substack{m<n\leq u \\ %(n,q)=1 \\ 
    n \equiv -m^\prime \pmod q}} r_k(n) du dt \\
    &\ll \varphi(q) q^{-1+\eps}.
\end{align*}
Thus, we have
\begin{align}
\label{M_4}
    M_4 &\ll \varphi(q) q^{-1+\eps}.
\end{align}

Therefore, combining (\ref{M_1}), (\ref{M_2}), (\ref{M_3}) and (\ref{M_4}) we have
\begin{align*}
    \sum_{\substack{\chi \neq \chi_0 \\ \chi(-1)=1}} \frac{1}{\abs{L(1,\chi)}^{2k}}&= \sum_{\substack{\chi \\ \chi(-1)=1}} \frac{1}{\abs{L(1,\chi)}^{2k}} \\
    &= \frac{\varphi(q)}{2}\sum_{\substack{m =1 \\ (m,q)=1}}^\infty \frac{r_k(m)^2}{m^{2}} 
    + O \left( \varphi(q)q^{-1+\eps} \right).
\end{align*}
We see that $r_k(n)$ is supported on the set of $(k+1)$-free integers and $r_k(p^v)^2= \binom{k}{v}^2$ if $v \leq k$. Hence the coefficient of $\varphi(q)/2$ is 
\begin{align*}
    \sum_{\substack{m =1 \\ (m,q)=1}}^\infty \frac{r_k(m)^2}{m^{2}} &=\prod_{\substack{p \\ p \nmid q}} \left( 1+\frac{\binom{k}{1}^2}{p^2}+\dots+\frac{\binom{k}{k}^2}{p^{2k}}\right) \\
    & =C(k) \prod_{p \mid q} \left( 1+\frac{\binom{k}{1}^2}{p^2}+\dots+\frac{\binom{k}{k}^2}{p^{2k}}\right)^{-1}.
\end{align*}
We complete the proof.
\end{proof}

By the same argument as the proof of Theorem \ref{thm:negative moment}, we immediately obtain the followings.

\begin{thm} 
    Let $\chi$ be a Dirichlet character modulo $q$ and $k$ be a positive integer. Under the GRH, we have
    \begin{align*}
    \sum_{\substack{\chi \\ \chi(-1)=-1}} \frac{1}{\abs{L(1,\chi)}^{2k}}&= \frac{C(k)}{2}\varphi(q)\prod_{p\mid q} \left( 1+\frac{\binom{k}{1}^2}{p^2}+\dots+\frac{\binom{k}{k}^2}{p^{2k}} \right)^{-1}\left(1+O \left( q^{-1+\eps}\right)\right),
    \end{align*}
    where $C(k)$ is given by (\ref{C(k)}).
\end{thm}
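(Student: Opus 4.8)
The plan is to run the argument of the proof of Theorem~\ref{thm:negative moment} essentially verbatim, with the even-character orthogonality replaced by the odd-character case of Lemma~\ref{lem:Character orthogonality}, namely
\[
\sum_{\substack{\chi \\ \chi(-1)=-1}} \chi(n_1)\overline{\chi}(n_2) = \frac{\varphi(q)}{2}\bm{1}_{n_1 \equiv n_2 \ (q)} - \frac{\varphi(q)}{2}\bm{1}_{n_1 \equiv -n_2 \ (q)}
\]
for $(n_1n_2,q)=1$. The first observation is that the principal character $\chi_0$ is even, so it never appears in the sum over odd $\chi$; in particular, unlike the even case, there is no exceptional term to discard and the left-hand side already equals $\sum_{\chi(-1)=-1}\abs{L(1,\chi)}^{-2k}$ with no correction.

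Next I would expand each $L(1,\chi)^{-k}$ with the Dirichlet series~(\ref{Dirichlet series})---valid at $s=1$ for every odd (hence non-principal) $\chi$ under the GRH---and apply partial summation at $y=q^2$, writing $L(1,\chi)^{-k}=\sum_{m\le y}\chi(m)r_k(m)/m+\int_y^\infty A(t,r_k,\chi)t^{-2}\,dt$ exactly as before. Multiplying $L(1,\chi)^{-k}$ by its conjugate and summing over odd characters again splits into the four pieces $M_1,M_2,M_3,M_4$. Applying the odd orthogonality relation, the diagonal term $m\equiv n$ in $M_1$ is unchanged and contributes $\frac{\varphi(q)}{2}\sum_{(m,q)=1}r_k(m)^2/m^2+O(\varphi(q)q^{-2+\eps})$, while the only formal difference is that the anti-diagonal condition $m\equiv -n$ now enters with a minus sign.

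The point is that this sign flip is harmless: the bounds for $M_{11}$ and $M_{12}$, for $M_2$ and $M_3$, and for both the diagonal and non-diagonal portions of $M_4$ all proceed by taking absolute values and using $\abs{r_k(n)}\le d_k(n)\ll n^\eps$ together with Lemma~\ref{lem:average sums of r_k mu}, so each remains $O(\varphi(q)q^{-1+\eps})$ regardless of the sign on the $m\equiv -n$ terms. Collecting the analogues of (\ref{M_1})--(\ref{M_4}) in this setting gives $\sum_{\chi(-1)=-1}\abs{L(1,\chi)}^{-2k}=\frac{\varphi(q)}{2}\sum_{(m,q)=1}r_k(m)^2/m^2+O(\varphi(q)q^{-1+\eps})$, and evaluating the Euler product via $r_k(p^v)^2=\binom{k}{v}^2$ for $v\le k$ (with $r_k$ supported on the $(k+1)$-free integers), namely $\sum_{(m,q)=1}r_k(m)^2/m^2=C(k)\prod_{p\mid q}(1+\binom{k}{1}^2/p^2+\dots+\binom{k}{k}^2/p^{2k})^{-1}$, yields the claimed asymptotic. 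There is no genuine obstacle: the only subtleties worth a remark are confirming that $\chi_0$ is absent so no correction term is needed, and noting explicitly that the negative sign attached to the $m\equiv -n$ contributions does not affect any error estimate since those contributions were only ever bounded in absolute value.
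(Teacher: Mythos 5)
Your proposal is correct and is exactly the argument the paper intends: the paper states this result with the one-line justification ``by the same argument as the proof of Theorem~\ref{thm:negative moment},'' and your write-up is a faithful fleshing-out of that, correctly isolating the two points worth noting (that $\chi_0$ is even so no correction term arises, and that the sign flip on the $m\equiv -n$ terms is harmless because those contributions are only ever bounded in absolute value).
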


\begin{thm} 
    Let $\chi$ be a Dirichlet character modulo $q$ and $k$ be a positive integer. Under the GRH, we have
    \begin{align*}
    \sum_{\chi \neq \chi_0 } \frac{1}{\abs{L(1,\chi)}^{2k}}&= C(k) \varphi(q)\prod_{p\mid q} \left( 1+\frac{\binom{k}{1}^2}{p^2}+\dots+\frac{\binom{k}{k}^2}{p^{2k}} \right)^{-1}\left(1+O \left( q^{-1+\eps}\right)\right),
    \end{align*}
    where $C(k)$ is given by (\ref{C(k)}).
\end{thm}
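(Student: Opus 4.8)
The plan is to derive this identity by adding the two theorems that immediately precede it, exploiting the parity decomposition of the character group modulo $q$. Since the principal character $\chi_0$ is even and every odd character is automatically non-principal, one has the exact splitting
\[
\sum_{\chi \neq \chi_0} \frac{1}{\abs{L(1,\chi)}^{2k}} = \sum_{\substack{\chi \neq \chi_0 \\ \chi(-1)=1}} \frac{1}{\abs{L(1,\chi)}^{2k}} + \sum_{\substack{\chi \\ \chi(-1)=-1}} \frac{1}{\abs{L(1,\chi)}^{2k}}.
\]
First I would apply Theorem~\ref{thm:negative moment} to the first term on the right and the odd-character analogue (the theorem stated just above) to the second; each equals
\[
\frac{C(k)}{2}\,\varphi(q)\prod_{p\mid q}\left(1+\frac{\binom{k}{1}^2}{p^2}+\dots+\frac{\binom{k}{k}^2}{p^{2k}}\right)^{-1}\left(1+O(q^{-1+\eps})\right).
\]
Adding the two halves turns $C(k)/2$ into $C(k)$ and leaves the Euler-product factor unchanged, which is exactly the claimed main term; the two error contributions are each $O(\varphi(q)q^{-1+\eps})$, so their sum is as well.

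An alternative route, and the one suggested by the remark "by the same argument as the proof of Theorem~\ref{thm:negative moment}", is to rerun that proof verbatim, replacing Lemma~\ref{lem:Character orthogonality} by the plain orthogonality relation $\sum_{\chi} \chi(n_1)\overline{\chi}(n_2) = \varphi(q)\,\bm{1}_{n_1 \equiv n_2\ (q)}$ valid for $(n_1 n_2,q)=1$. Then the diagonal part of $M_1$ becomes $\varphi(q)\sum_{(m,q)=1}^\infty r_k(m)^2/m^2$ with no factor $1/2$ and no $m\equiv -n$ term, while the off-diagonal part of $M_1$ and the quantities $M_2,M_3,M_4$ are bounded by exactly the same calculations, each $O(\varphi(q)q^{-1+\eps})$ under the GRH via Lemma~\ref{lem:average sums of r_k mu}. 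The Euler product $\sum_{(m,q)=1}^\infty r_k(m)^2/m^2 = C(k)\prod_{p\mid q}\bigl(1+\binom{k}{1}^2 p^{-2}+\dots+\binom{k}{k}^2 p^{-2k}\bigr)^{-1}$ is evaluated as in the proof of Theorem~\ref{thm:negative moment}.

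I do not expect any real obstacle; the single point deserving a line of justification is that after factoring out the main term---which has size $\asymp_k \varphi(q)$, since $\prod_{p\mid q}(\dots)^{-1} \in [1/C(k),1]$---an absolute error of size $O(\varphi(q)q^{-1+\eps})$ becomes a relative error $O(q^{-1+\eps})$, with the implied constant depending only on $k$ and $\eps$. This uniformity is already present in the proof of Theorem~\ref{thm:negative moment}, so nothing new needs to be checked.
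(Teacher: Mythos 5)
Your proposal is correct, and your second route is exactly what the paper does: it proves this theorem with the single sentence ``By the same argument as the proof of Theorem~\ref{thm:negative moment}, we immediately obtain the followings,'' which amounts to rerunning that proof using the plain orthogonality relation $\sum_{\chi}\chi(n_1)\overline{\chi}(n_2)=\varphi(q)\bm{1}_{n_1\equiv n_2\ (q)}$ so the factor $1/2$ disappears. Your first route --- simply adding the even-character and odd-character theorems, each contributing $\tfrac{C(k)}{2}\varphi(q)\prod_{p\mid q}(\cdots)^{-1}(1+O(q^{-1+\eps}))$ --- is an even shorter and equally valid derivation, since the main terms add to $C(k)\varphi(q)\prod_{p\mid q}(\cdots)^{-1}$ and the relative errors $O(q^{-1+\eps})$ are stable under addition because the two main terms are comparable in size.
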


\section{Proof of Theorem~\ref{thm:b}}
\label{sec:pf}
The first part follows immediately from Theorem \ref{thm:313b}. Moreover, in~\cite{CDPR15}, it is shown that if $q=p^\ell$ is a prime power, it holds that
\begin{align*}
    \lVert \bb_j^\vee \rVert^2 = \frac{4}{\abs{G}} \sum_{\substack{\chi \neq \chi_0 \\ \chi(-1)=1}} \frac{1}{f_\chi \abs{L(1,\chi)}^2}.
\end{align*}
If $q$ is a prime number, then $f_\chi=q$. By applying Theorem~\ref{thm:negative moment} with $k=1$, we have
\begin{align*}
    \lVert \bb_j^\vee \rVert^2 &= \frac{4}{\varphi(q)} \sum_{\substack{\chi \neq \chi_0 \\ \chi(-1)=1}} \frac{1}{q\abs{L(1,\chi)}^2} \\
    &= 4C(1)\frac{q}{q^2+1}\left(1+O\left( q^{-1+\eps} \right)\right) \\
    &= \frac{4C(1)}{q}\left(1+O\left( q^{-1+\eps} \right)\right).
\end{align*}
The coefficient $C(1)$ is 
\begin{align*}
    \prod_{p} \left( 1+\frac{1}{p^2}\right) = \frac{\zeta(2)}{\zeta(4)}
\end{align*}
as claimed.

\begin{ack} 
The authors would like to thank Yuya Kanado for pointing out a mistake. The authors would also like to thank Professor Fran{\c{c}}ois Le Gall and Professor Kohji Matsumoto for giving us valuable comments. I.-I. Ng is supported by MEXT Q-LEAP grant No. JPMXS0120319794.
Y. Toma is supported by Grant-in-Aid for JSPS Research Fellow grant No. 24KJ1235. 
\end{ack} 

% %\nocite{*}
\printbibliography[heading=none]
\end{document}